\documentclass[10pt]{article}
\usepackage[latin5]{inputenc}
\usepackage{amsfonts}
\usepackage{amssymb}
\usepackage{amsmath}
\usepackage{cite}
\usepackage{bbm}
\usepackage{graphicx}
\usepackage{pstricks,pst-math,pst-infixplot}
\usepackage[dotinlabels]{titletoc}
\usepackage{psfrag}
\usepackage[shortlabels]{enumitem}
\usepackage{maplestd2e}
\usepackage{lineno}

\newcounter{theorem}
\newtheorem{theorem}{Theorem}

\newtheorem{example}{Example}

\newenvironment{proof}[1][Proof]{\textbf{#1.} }{\rule{0.5em}{0.5em}}
\oddsidemargin  0.75cm
\textheight 23cm
\textwidth 14cm
\headsep 0in


\begin{document}

\title{AN ALGORITHM FOR THE CONSTRUCTION OF THE TIGHT SPAN OF FINITE SUBSETS OF THE MANHATTAN PLANE}
\author{Mehmet Kılıç \footnote{Anadolu University, Department of Mathematics, 26470, Eskişehir, Turkey, kompaktuzay@gmail.com},\, Şahin Koçak \footnote{Anadolu University, Department of Mathematics, 26470, Eskişehir, Turkey, skocak@anadolu.edu.tr},\, and Yunus Özdemir \footnote{Anadolu University, Department of Mathematics, 26470, Eskişehir, Turkey, yunuso@anadolu.edu.tr}}
\date{ }

\maketitle

\begin{abstract}
We give a simple algorithm for the construction of  the tight span of a finite subset of the
Manhattan plane.
\end{abstract}

\section{Introduction}

The notion of tight span, which plays an important role in phylogenetic analysis, was invented by Dress~\cite{dre}. One of his motivations was to construct optimal realizations of finite metric spaces in metric graphs (i.e. isometric embeddings into metric graphs with minimal total weight). It turned out that the tight span was actually a rediscovering of the notion of injective hull due to Isbell~\cite{isb}.

Tight span of subsets of the Manhattan plane is an especially interesting and important special case, which is investigated in the elaborate work of Eppstein~\cite{epp} (We note however that the important Lemma~9 in \cite{epp} requires a slight correction. Eppstein remarks that it can be fixed by taking the closure of his so called orthogonal hull \cite{epp1}).

In a previous work we constructed the tight span of a path connected subset of the Manhattan plane by a very simple device of hatching the subset successively in both axis-directions. In the present note we give a simple algorithm for the construction of the tight span of a finite subset of the Manhattan plane by embedding the given finite subset into a suitable path connected subset and then applying the hatching operation.

In Section~\ref{Section1} we formulate the algorithm and prove that the
resulting set is indeed the tight span of the given finite set
(Theorem~\ref{sn2}). In Section~\ref{Section2} we give some examples and in the
appendix, we give the source-code of the program, whose input is a
finite subset of the Manhattan plane and whose output is its tight
span.
\section{An Algorithm for the Construction of the Tight Span of Finite Subsets}\label{Section1}

We recall that the Manhattan plane is the metric space $\mathbb{R}_1^2=(\mathbb{R}^2,d_1)$ with \[d_1((x_1,y_1),(x_2,y_2))=|x_1-x_2|+|y_1-y_2|.\]
A geodesic between two points is a shortest path between these points parameterized by arc-length and a shortest path between two points is a path with length realizing the distance between these points (see \cite{bur},\cite{pap}). In the Manhattan plane a shortest path between two points $(x_1,y_1)$ and $(x_2,y_2)$ with $x_1\leq x_2$ and $y_1\leq y_2$ is a path $\alpha=(\alpha_1,\alpha_2):[a,b]\rightarrow\mathbb{R}^2$ with $\alpha(a)=(x_1,y_1)$, $\alpha(b)=(x_2,y_2)$ and $\alpha_1(t_1)\leq\alpha_1(t_2)$ and $\alpha_2(t_1)\leq\alpha_2(t_2)$ for $t_1\leq t_2$ (see Figure~\ref{somegeodesics}a for a sample of geodesics). Likewise, for two points $(x_1,y_1)$ and $(x_2,y_2)$ with $x_1\leq x_2$ and $y_1\geq y_2$ the component $\alpha_1$ of $\alpha$ must be non-decreasing and the component $\alpha_2$ must be non-increasing c.f.~\cite{pap}; see Figure~\ref{somegeodesics}b. If $x_1=x_2$ or $y_1=y_2$ for a pair of points $(x_1,y_1)$ and $(x_2,y_2)$, then there exist a unique geodesic between these points which is the linear segment connecting them (see Figure~\ref{somegeodesics}c).

\begin{figure}[h]
\begin{center}
\begin{pspicture}(-5.25,-6.75)(7.5,-2)
\psline[linewidth=0.25pt]{->}(-5.25,-4.5)(-1.75,-4.5)
\psline[linewidth=0.25pt]{->}(-3.5,-6.25)(-3.5,-2.5)
\uput[u](2.5,-4.5){$x$} \uput[u](0.75,-2.5){$y$}\uput[u](7,-4.5){$x$}
\uput[u](-1.75,-4.5){$x$} \uput[u](-3.5,-2.5){$y$}
\psline[linewidth=0.25pt]{->}(-1,-4.5)(2.5,-4.5)
\psline[linewidth=0.25pt]{->}(0.75,-6.25)(0.75,-2.5)
\uput[u](5,-2.5){$y$}
\psline[linewidth=0.25pt]{->}(3.25,-4.5)(7,-4.5)
\psline[linewidth=0.25pt]{->}(5,-6.25)(5,-2.5)

\psdots(-5,-5.5)(-2,-3)
\uput[d](-4.75,-5.5){$(x_1,y_1)$}
\uput[u](-2,-3){$(x_2,y_2)$}
\psline[linestyle=dotted](-5,-5.5)(-2,-5.5)(-2,-3)
\psline[linestyle=dashed](-2,-3)(-5,-3)(-5,-5.5)
\pscurve[linewidth=0.25pt](-5,-5.5)(-3,-5)(-2,-3)
\psline[linewidth=1.25pt](-5,-5.5)(-4.5,-5)(-4,-5)(-4,-4)(-3,-4)(-2,-3)
\psdots(-0.75,-3)(2.25,-5.5)
\uput[u](-0.5,-3){$(x_1,y_1)$}
\uput[d](2.25,-5.5){$(x_2,y_2)$}

\psline[linestyle=dashed](2.25,-5.5)(2.25,-3)(-0.75,-3)
\psline[linestyle=dotted](-0.75,-3)(-0.75,-5.5)(2.25,-5.5)
\pscurve[linewidth=0.25pt](-0.75,-3)(0,-3.75)(1,-5)(2.25,-5.5)
\psline[linewidth=1.25pt](-0.75,-3)(0,-3.5)(1,-3.5)(1,-4.5)
\pscurve(1,-4.5)(2,-5)(2.25,-5.5)
\psdots(3.5,-3)(3.5,-4.25)
\psline(3.5,-3)(3.5,-4.25)
\psdots(6.5,-5)(4,-5)
\psline(6.5,-5)(4,-5)

\uput[d](-3.5,-6.5){(a)}
\uput[d](0.75,-6.5){(b)}
\uput[d](5,-6.5){(c)}
\end{pspicture}
\caption{Geodesics in the Manhattan plane.}
\label{somegeodesics}
\end{center}
\end{figure}
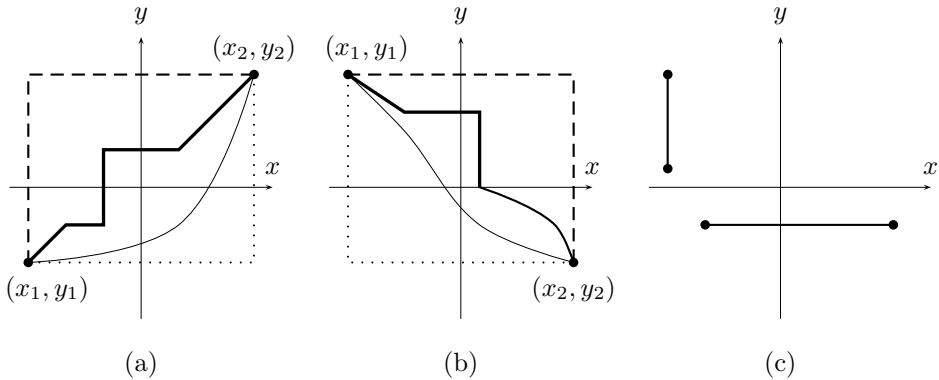

We will also need the hatching operations for subsets of the Manhattan plane. For a subset $A\subseteq\mathbb{R}_1^2$, let $A_{x_0}$ denote the intersection of the set $A$ with the vertical line $x=x_0$ and $A^{y_0}$ the intersection of $A$ with the horizontal line $y=y_0$.
Given any subset $X\subseteq \mathbb{R}_1^2$ lying on a horizontal or vertical line in $\mathbb{R}_1^2$, denote the
minimal segment (possibly infinite or empty) containing the set $X$ and contained in the same horizontal or vertical line by $[X]$.
We can now define
\[
L_x(A)=\bigcup_{y\in\mathbb{R}}[A^y]
\]
and
\[
L_y(A)=\bigcup_{x\in\mathbb{R}}[A_x].
\]
We call the set $L_x(A)$ the hatching of $A$ in the $x$-direction and $L_y(A)$ the hatching of $A$ in the $y$-direction. For a path connected subset $A\subseteq\mathbb{R}_1^2$, the operations $L_x$ and $L_y$ commute and we define $L(A)=L_x(L_y(A))=L_y(L_x(A))$ (see \cite{kilicarxiv}).

For  later use we want to fix the following notation:

For $p=(x_1,y_1)\in\mathbb{R}_1^2$ and $\varepsilon_1,\varepsilon_2=\pm$, we call the set
\[
Q_p^{\varepsilon_1\varepsilon_2}=\{q=(x_2,y_2)\in \mathbb{R}_1^2\ |\ \varepsilon_1(x_2-x_1)\geq 0 \mbox{ and } \varepsilon_2(y_2-y_1)\geq 0 \}
\]
the $\varepsilon_1\varepsilon_2-$quadrant of $p$ (see Figure~\ref{quadrants}).

Finally we want to remind the notion of tight span for a metric space.  Let $(X,d)$ be any metric space and consider the set of pointwise minimal functions $f:X\rightarrow \mathbb{R}^{\geq0}$ satisfying the property: \[f(p)+f(q)\geq d(p,q)\] for all $p,q\in X$. The tight span $T(X)$ of $X$ is then this set of functions with the supremum metric: \[d_{\infty}(f,g)=\sup_{p\in X}|f(p)-g(p)|.\]
The tight span of a metric space can be shown to be strictly intrinsic, i.e. between any two points there exists a geodesic (\cite{kilicarxivilk}).

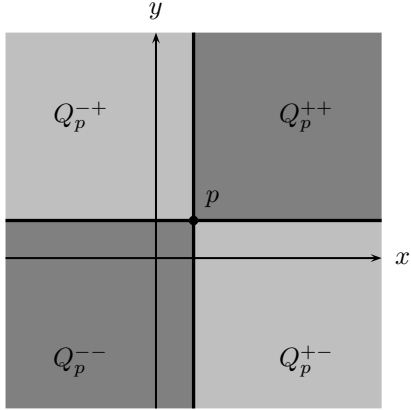
\begin{figure}[h!]
\begin{center}
\begin{pspicture}(-2,-2)(3.25,3.25)
\pspolygon*[linecolor=gray](0.5,0.5)(0.5,3)(3,3)(3,0.5)
\pspolygon*[linecolor=lightgray](0.5,0.5)(0.5,3)(-2,3)(-2,0.5)
\pspolygon*[linecolor=lightgray](0.5,0.5)(0.5,-2)(3,-2)(3,0.5)
\pspolygon*[linecolor=gray](0.5,0.5)(0.5,-2)(-2,-2)(-2,0.5)
\uput[u](0,3){$y$} \psline[linewidth=0.75pt]{->}(0,-2)(0,3)
\psline[linewidth=0.75pt]{->}(-2,0)(3,0) \uput[r](3,0){$x$}
\psdot(0.5,0.5)
\psline[linewidth=1.25pt](0.5,-2)(0.5,3)
\psline[linewidth=1.25pt](-2,0.5)(3,0.5)
\uput[u](0.75,0.5){$p$}
\uput[u](2,1.5){$Q_p^{++}$}
\uput[u](-1,1.5){$Q_p^{-+}$}
\uput[u](2,-1.75){$Q_p^{+-}$}
\uput[u](-1,-1.75){$Q_p^{--}$}
\end{pspicture}
\caption{Quadrants of the point $p$.} \label{quadrants}
\end{center}
\end{figure}

We have given in \cite{kilicarxiv} the following construction for the tight span of a path connected subset $A\subseteq\mathbb{R}_1^2$:

\begin{theorem}
For a path connected subset $A\subseteq\mathbb{R}_1^2$, $\overline{L(A)}$ is isometric to the tight span of $A$ (i.e. one can obtain the tight span by hatching the given subset successively in both axis-directions and taking the closure of the resulting set).

We can now formulate our main result:
\end{theorem}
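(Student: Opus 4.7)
The plan is to exhibit an explicit isometry $\phi \colon \overline{L(A)} \to T(A)$ given by $\phi(z)(p) = d_1(z,p)$ for $z \in \overline{L(A)}$ and $p \in A$, and then verify (i) that $\phi(z)$ lies in $T(A)$ for every $z$, (ii) that $\phi$ preserves distances, and (iii) that $\phi$ is onto. Step (i) splits into the automatic inequality $\phi(z)(p)+\phi(z)(q) \geq d_1(p,q)$ (triangle inequality) and the pointwise minimality: for every $p \in A$ one must find $q \in A$ with $d_1(z,p) + d_1(z,q) = d_1(p,q)$, equivalently, $z$ lying on a Manhattan geodesic from $p$ to $q$. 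In the quadrant language of the paper this asks the quadrant of $z$ opposite to the one containing $p$ to meet $A$. This is exactly what the hatching supplies: since $z \in L(A) = L_x(L_y(A))$, the point $z$ sits on a horizontal segment whose endpoints lie in $L_y(A)$, and those endpoints in turn sit on vertical segments between points of $A$. A four-way case check over the quadrant of $p$ then produces a suitable $q$. Passing from $L(A)$ to $\overline{L(A)}$ is by continuity, using closedness of $T(A)$ in the sup metric.

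For (ii), the reverse triangle inequality gives $d_\infty(\phi(z_1), \phi(z_2)) \leq d_1(z_1,z_2)$ for free. Equality requires $p \in A$ with one of $z_1, z_2$ lying on a Manhattan geodesic from the other to $p$; I would again translate this into a quadrant condition, namely the existence of a point of $A$ beyond $z_1$ in the direction away from $z_2$ (or symmetrically), and verify it from the same hatching analysis.

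Step (iii), the surjectivity, is where the real work lies and where I expect the main obstacle. Given an arbitrary $f \in T(A)$, I would try to read off a candidate point $z = (a,b)$ by exploiting the product structure of $\mathbb{R}_1^2$: since $d_1$ is a sum of two one-dimensional distances, the pointwise-minimality equalities for $f$ split into separate $x$- and $y$-conditions, which (together with tightness) should pin down $a$ and $b$. The delicate issue is to show that this candidate $(a,b)$ actually falls inside $\overline{L(A)}$, not merely somewhere in $\mathbb{R}^2$ — and it is here that the hatching, and not just path-connectedness of $A$, will have to be used essentially.
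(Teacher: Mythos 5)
First, a point of reference: the paper does not actually prove this theorem. It is imported verbatim from the authors' earlier work \cite{kilicarxiv} (``Tight Span of Path Connected Subsets of the Manhattan Plane''), and only its statement is used here. So there is no in-paper proof to compare against; your proposal has to be judged on its own terms.

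On those terms, your plan has the right overall shape --- the Kuratowski-type map $\phi(z)(p)=d_1(z,p)$, the translation of pointwise minimality into an ``opposite quadrants of $z$ both meet $A$'' condition, and the use of the hatching to supply points of $A$ in the required quadrants are all the natural ingredients --- but as written it is a programme, not a proof, and the gap sits exactly where you say it does: surjectivity. You give no construction of the candidate point $(a,b)$ from an arbitrary $f\in T(A)$ beyond the remark that the coordinates ``should'' be pinned down by splitting $d_1$ into its two one-dimensional summands, and no argument that the candidate lands in $\overline{L(A)}$. Without step (iii) you have at best an isometric embedding $\overline{L(A)}\hookrightarrow T(A)$, which is strictly weaker than the theorem. (One standard way to close this gap without constructing preimages by hand is to invoke the characterization of the tight span as a minimal hyperconvex/injective superspace: if you can show $\overline{L(A)}$ is itself hyperconvex --- e.g., because it is closed under the hatching operations and closed topologically, hence an $\ell_1$-analogue of an orthogonally convex closed set --- then an isometric embedding of $A$ into it that admits no proper hyperconvex subspace containing $A$ forces it to be the tight span. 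Either route requires real work that the proposal does not contain.) A secondary issue: for a general path connected $A$ (possibly unbounded or not closed), pointwise minimality of $f$ is equivalent to $f(p)=\sup_{q\in A}\bigl(d_1(p,q)-f(q)\bigr)$, and this supremum need not be attained; your step (i) demands an exact $q$ with $d_1(z,p)+d_1(z,q)=d_1(p,q)$, which is too strong. The hatching only guarantees points of $L_y(A)$ arbitrarily close to the endpoints of the horizontal segment through $z$, so the whole case analysis in (i) and (ii) must be run with $\varepsilon$'s, which is harmless but not automatic.
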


\begin{theorem}
\label{sn2} Let $A\subseteq\mathbb{R}_1^2$ be a nonempty, finite
subset. Then, the tight span of $A$ can be constructed by the
following procedure:
\setlist[enumerate,1]{leftmargin=2cm}
 \begin{enumerate}[(1)]
\item[Step 1:] Choose a point $(a,b)\in A$ with the smallest ordinate (if there are several such points, choose the one with the smallest abscissa). Define ${\rm SPINE}=\{(a,b)\}$.

\item[Step 2:] If there are no points with higher ordinate than $b$, then go to Step~5.

\item[Step 3:] \mbox{ }
\setlist[enumerate,2]{leftmargin=0cm}
\begin{enumerate}[(2)]
\item[i)]  If there is a point $(x_1,y_1)\in A$ with $y_1>b$ and $x_1\geq a$; and there is a point $(x_2,y_2)\in A$ with $y_2>b$ and $x_2\leq a$, then let
\begin{eqnarray*}
t=\min\{\max\{y-b\, |\, (x,y)\in A,\ y>b,\ x\geq a\},\\ \max\{y-b\, |\,
(x,y)\in A,\ y>b,\ x\leq a\}\},
\end{eqnarray*}
and define ${\rm SPINE}={\rm SPINE} \, \cup \, [(a,b),(a,b+t)]$, where \mbox{$[(a,b),(a,b+t)]$} denotes the segment between the points $(a,b)$ and $(a,b+t)$, and set $b:=b+t$.
\item[ii)] If there is a point $(x_1,y_1)\in A$ with $y_1>b$ and $x_1\geq a$; but there is no point $(x,y)\in A$ with $y>b$ and $x\leq a$, then let
\begin{eqnarray*}
t=\min\{x-a\,|\, (x,y)\in A,\ y>b,\ x\geq a\},
\end{eqnarray*}
and define ${\rm SPINE}={\rm SPINE}\cup[(a,b),(a+t,b)]$, and set $a:=a+t$.
\item[iii)]  If there is a point $(x_1,y_1)\in A$ with $y_1>b$ and $x_1\leq a$; but there is no point $(x,y)\in A$ with $y>b$ and $x\geq a$, then let
\begin{eqnarray*}
t=\min\{a-x\,|\, (x,y)\in A,\ y>b,\ x\leq a\},
\end{eqnarray*}
and define ${\rm SPINE}={\rm SPINE}\cup[(a-t,b),(a,b)]$, and set $a:=a-t$.
\end{enumerate}

\item[Step 4:] Go to Step 2.
\item[Step 5:] Connect the points of $A$ with horizontal segments to the ${\rm SPINE}$. We call the resulting set the ${\rm SKELETON}$.
\item[Step 6:] Apply the vertical hatching operation to the ${\rm SKELETON}$. The resulting set will be isometric to the tight span $T(A)$ of the given \mbox{set $A$.}
\end{enumerate}
\end{theorem}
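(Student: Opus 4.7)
The plan is to reduce to the previously established theorem on path-connected subsets by interposing ${\rm SKELETON}$. Concretely, I would verify three things: (a) ${\rm SKELETON}$ is a path-connected subset of $\mathbb{R}_1^2$ that contains $A$; (b) $L_x({\rm SKELETON}) = {\rm SKELETON}$, so that $L({\rm SKELETON}) = L_y({\rm SKELETON})$; and (c) $T({\rm SKELETON}) = T(A)$. Given these, the previous theorem applied to ${\rm SKELETON}$ yields $\overline{L_y({\rm SKELETON})} \cong T({\rm SKELETON}) \cong T(A)$, and since $L_y({\rm SKELETON})$ is a bounded finite union of axis-parallel segments it is already closed, so the output of Step~6 delivers the tight span of $A$.

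Items (a) and (b) are essentially book-keeping. For (a), each iteration of Step~3 appends a single axis-parallel segment to ${\rm SPINE}$ at the current endpoint $(a,b)$, and Step~5 links every remaining point of $A$ to ${\rm SPINE}$ by a horizontal segment; path-connectivity then follows by induction. For (b), one tracks the horizontal segments produced by Cases~(ii) and (iii) of Step~3 together with those added in Step~5 and checks that on each horizontal line they assemble into a single interval already contained in ${\rm SKELETON}$, so no additional horizontal hatching is required.

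The main obstacle is (c). My strategy uses Isbell's characterization of $T(X)$ as the injective hull of $X$: any isometric embedding $X \hookrightarrow Y$ into an injective metric space $Y$ extends to an isometric embedding $T(X) \hookrightarrow Y$. Applied to $A \subseteq {\rm SKELETON} \subseteq T({\rm SKELETON})$ this yields $T(A) \hookrightarrow T({\rm SKELETON})$. For the reverse embedding, it suffices to show that ${\rm SKELETON}$ embeds isometrically into $T(A)$, identified with its planar realization as an axis-parallel polygonal set in $\mathbb{R}_1^2$; then the same argument gives $T({\rm SKELETON}) \hookrightarrow T(A)$, and the two mutual embeddings force equality.

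The heart of the argument is therefore to show that every point $p$ of ${\rm SKELETON}$ belongs to $T(A)$, equivalently, that the function $q \mapsto d_1(p,q)$ on $A$ is pointwise minimal among functions $f:A\to\mathbb{R}^{\geq 0}$ satisfying $f(p')+f(q')\geq d_1(p',q')$ for all $p',q'\in A$. This is where the specific formula for $t$ in Step~3 is designed to work: in Case~(i) the vertical extension is capped exactly so that the new endpoint still has points of $A$ strictly above both on its right and on its left, and this ``two-sided witness'' is precisely what forces pointwise minimality of the associated distance function; in Cases~(ii) and (iii) the spine bends toward the only side that still carries higher points, preserving minimality for the same reason. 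The horizontal attachments of Step~5 and the vertical fillings of Step~6 then connect points of $T(A)$ along Manhattan geodesics, which remain inside $T(A)$ by its explicit description as a union of axis-parallel segments and rectangles. This verification is elementary but case-heavy and constitutes the most delicate part of the proof.
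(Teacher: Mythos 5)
Your proposal follows essentially the same route as the paper: interpose the ${\rm SKELETON}$, show its points define pointwise minimal distance functions on $A$ via the opposite-quadrant witnesses that the $\min/\max$ formula for $t$ is designed to produce, conclude $T({\rm SKELETON})=T(A)$, and finish by the path-connected hatching theorem using $L_x$-invariance. The only material difference is bookkeeping: the paper verifies the auxiliary spine vertices one at a time by induction (checking each $f_{p_{k+1}}$ against $A\cup\{p_1,\dots,p_k\}$, which makes the isometry of the embedding automatic) and then invokes strict intrinsicness and uniqueness of geodesics between axis-aligned points to absorb the full segments, which is cleaner than your direct pointwise verification over all of ${\rm SKELETON}$.
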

\begin{proof}
Let $p_1$ be the first point constructed in Step 3 (it may be $p_1=(a,b+t)$, $p_1=(a+t,b)$ or $p_1=(a-t,b)$ for some $t>0$). We will show that the subspace $A\cup\{p_1\}\subseteq\mathbb{R}_1^2$ can be isometrically embedded into $T(A)$. To see this, it will be enough to show that the function
\[
f_{p_1}:A\rightarrow\mathbb{R}^{\geq0},\, f_{p_1}(q)=d_1(p_1,q)
\]
belongs to $T(A)$, i.e.,
\begin{itemize}
\item
for every $q,r\in A$ $f_{p_1}(q)+f_{p_1}(r)\geq d_1(q,r)$ and
\item
for every $u\in A$, there must exist a $v\in A$ such that $f_{p_1}(u)+f_{p_1}(v)=d_1(u,v)$.
\end{itemize}
The first condition is obviously true by the triangle inequality and we will check the second condition by considering the positions of $p_1$ separately.
\begin{itemize}
\item[i)]
Let $p_1=(a,b+t)$. In this case every quadrant of the point $p_1$ contains a point of $A$. Given any $u\in A$, choose a point $v\in A$ lying in the opposite quadrant. Then it holds $f_{p_1}(u)+f_{p_1}(v)=d_1(u,v)$, which shows that $f_{p_1}$ belongs to $T(A)$.
\item[ii)]
Let $p_1=(a+t,b)$. In this case all points of $A$ except the point $(a,b)$, belong to the quadrant $Q_{p_1}^{++}$. Now if $u\in A$ is given from this quadrant, one can choose $v=(a,b)$ and if $u\in A$ is given as $u=(a,b)$, then one can choose $v\in A$ from $Q_{p_1}^{++}$ and it holds $f_{p_1}(u)+f_{p_1}(v)=d_1(u,v)$, so that $f_{p_1}$ belongs to $T(A)$.
\item[iii)]
Let $p_1=(a-t,b)$. In this case all points of $A$ except the point $(a,b)$, belong to the quadrant $Q_{p_1}^{-+}$. Now if $u\in A$ is given from this quadrant, one can choose $v=(a,b)$ and if $u\in A$ is given as $u=(a,b)$, then one can choose $v\in A$ from $Q_{p_1}^{-+}$ and it holds $f_{p_1}(u)+f_{p_1}(v)=d_1(u,v)$, so that $f_{p_1}$ belongs to $T(A)$.
\end{itemize}

Let us denote the auxiliary points emerging by application of Step 3 successively by $p_1, p_2, \cdots, p_n$. Assuming that the subset $A\cup\{p_1,p_2,\cdots,p_k\}\subseteq\mathbb{R}_1^2$ can be embedded into $T(A)$, we will show that $A\cup\{p_1,p_2,\cdots,p_{k+1}\}$ can be embedded into $T(A)$.

Note that our assumption implies \mbox{$A\cup\{p_1,p_2,\cdots,p_k\}\cong T(A)$.}

Let us denote $p_k=(c,d)$. Let us define the map
\[
f_{p_{k+1}}:A\cup\{p_1,p_2,\cdots,p_k\}\rightarrow\mathbb{R}^{\geq0}, \, f_{p_{k+1}}(q)=d_1(p_{k+1},q).
\]

We will show that this function belongs to $T(A\cup\{p_1,p_2,\cdots,p_k\})$. We have obviously $f_{p_{k+1}}(q)+f_{p_{k+1}}(r)\geq d_1(q,r)$ for any $q,r\in A\cup\{p_1,p_2,\cdots,p_k\}$ and we want to show that for any $u\in A\cup\{p_1,p_2,\cdots,p_k\}$, there exists a $v\in A\cup\{p_1,p_2,\cdots,p_k\}$ such that $f_{p_{k+1}}(u)+f_{p_{k+1}}(v)=d_1(u,v)$

We consider again the three positions of $p_{k+1}$ separately.

\begin{itemize}
\item[i)]
Let $p_{k+1}=(c,d+t)$. In this case every quadrant of the point $p_{k+1}$ contains a point of $A\cup\{p_1,p_2,\cdots,p_k\}$ and for any point $u\in A\cup\{p_1,p_2,\cdots,p_k\}$, one can choose a point $v$ in the opposite quadrant as in the previous case.
\item[ii)]
Let $p_{k+1}=(c+t,d)$. In this case there can not be any points of $A\cup\{p_1,p_2,\cdots,p_k\}$ in the interior of $Q_{p_{k+1}}^{-+}$. Now if $u$ is given from $Q_{p_{k+1}}^{++}$ or $Q_{p_{k+1}}^{+-}$, we can choose $v=(c,d)$; if $u$ is given from $Q_{p_{k+1}}^{--}$, we can choose $v\in Q_{p_{k+1}}^{++}\cap(A\cup\{p_1,p_2,\cdots,p_k\})$ and in both cases $f_{p_{k+1}}(u)+f_{p_{k+1}}(v)=d_1(u,v)$ is satisfied.
\item[iii)]
Let $p_{k+1}=(c-t,d)$. In this case there can not be any points of $A\cup\{p_1,p_2,\cdots,p_k\}$ in the interior of $Q_{p_{k+1}}^{++}$. Now if $u$ is given from $Q_{p_{k+1}}^{-+}$ or $Q_{p_{k+1}}^{--}$, we can choose $v=(c,d)$; if $u$ is given from $Q_{p_{k+1}}^{+-}$, we can choose $v\in Q_{p_{k+1}}^{-+}\cap(A\cup\{p_1,p_2,\cdots,p_k\})$ and in both cases $f_{p_{k+1}}(u)+f_{p_{k+1}}(v)=d_1(u,v)$ is satisfied.
\end{itemize}

The above consideration show that $T(A\cup\{p_1,p_2,\cdots,p_n\})=T(A)$ and to determine $T(A)$, it will be enough to determine $T(A\cup\{p_1,p_2,\cdots,p_n\})$. Since the tight span is strictly intrinsic (see \cite{kilicarxivilk}), an embedded copy of $T(A\cup\{p_1,p_2,\cdots,p_n\})$ must contain the ${\rm SPINE}$ and by the same reason the ${\rm SKELETON}$, so that we get
\begin{eqnarray*}
T(A)&=&T(A\cup\{p_1,p_2,\cdots,p_n\})\\
&=&T(A\cup {\rm SPINE})\\
&=&T({\rm SKELETON}).
\end{eqnarray*}
As the ${\rm SKELETON}$ is a path connected subset and $L_x-$invariant, we get by \cite{kilicarxiv}
\[
T(A)=\overline{L_y({\rm SKELETON})}=L_y({\rm SKELETON}),
\]
which is the conclusion of the theorem.
\end{proof}

\section{Some Examples}\label{Section2}

\begin{example}
Let $A=\{(-1,0), (0,-2),(\frac{3}{2},1)\}$. The ${\rm SPINE}$ and the ${\rm SKELETON}$ for this set are shown in Figure~\ref{Fig:ed3points}b and Figure~\ref{Fig:ed3points}c. The  ${\rm SKELETON}$ is at the same time the tight span as the vertical hatching adds no points. We remind that the tight span is unique only up to isometry and there can be different embeddings of a tight span into the Manhattan plane. Two additional such embeddings are shown in Figures~\ref{Fig:ed3points}d--\ref{Fig:ed3points}e.

We also want to remark that the assumption of Lemma~9 of Eppstein \cite{epp} is not satisfied (i.e. the orthogonal hull is not connected), so that the orthogonal hull of the set $A$ does not give the tight span of $A$ (see Figure~\ref{Fig:ed3points}f).
\end{example}

In the following examples, Examples~2-5, we show in each case the considered subset of the Manhattan plane, its ${\rm SPINE}$ and ${\rm SKELETON}$ according to our algorithm and the vertical hatching of the ${\rm SKELETON}$, which is the tight span of the given finite subset (see Figures~\ref{Fig:4points}~-~\ref{Fig:ed15points}).

\begin{figure}[h!]
\begin{minipage}[b]{0.45\linewidth}
\centering
\includegraphics[width=0.7\linewidth]{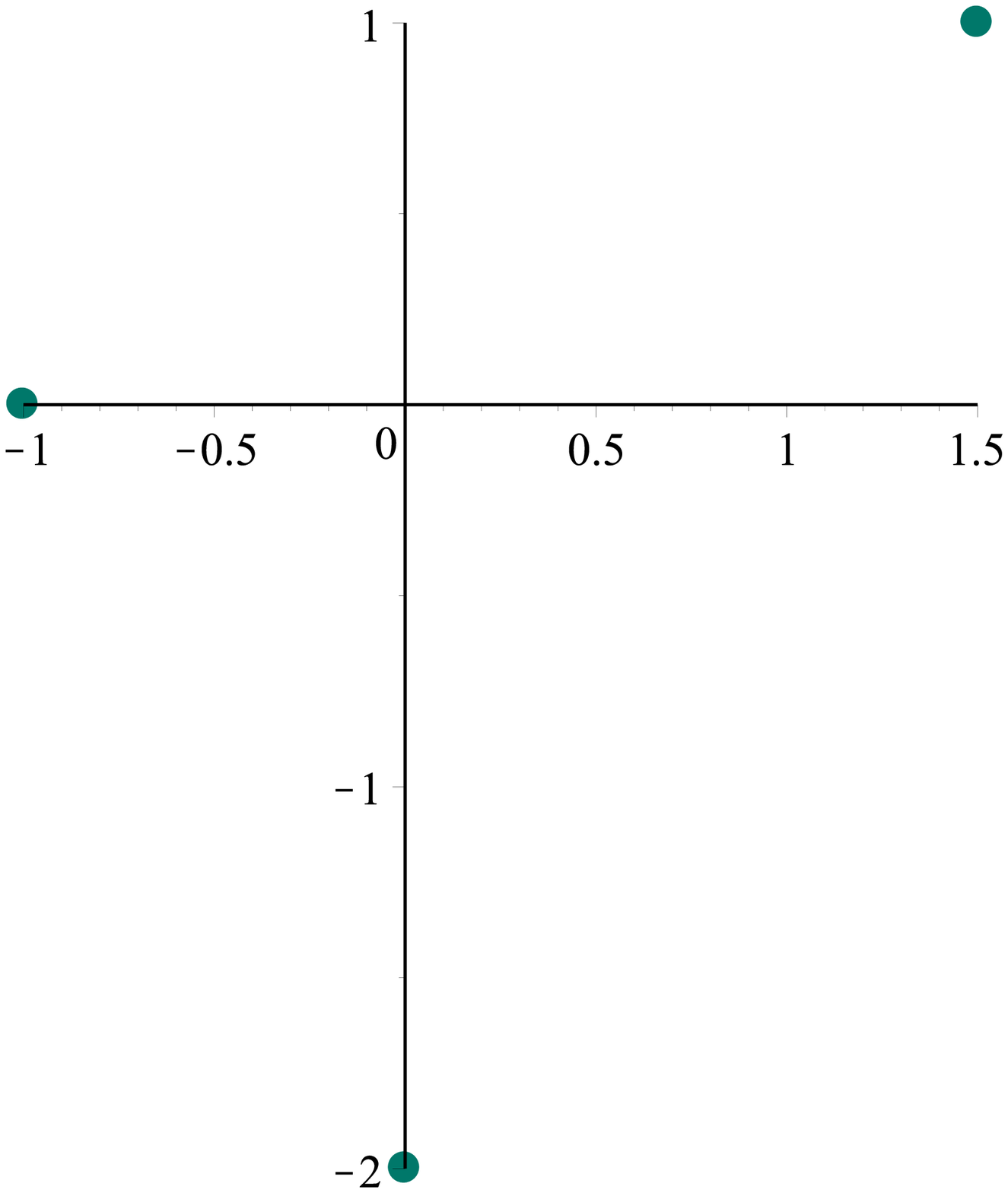}
\end{minipage}
\hspace{1cm}
\begin{minipage}[b]{0.45\linewidth}
\centering
\includegraphics[width=0.7\linewidth]{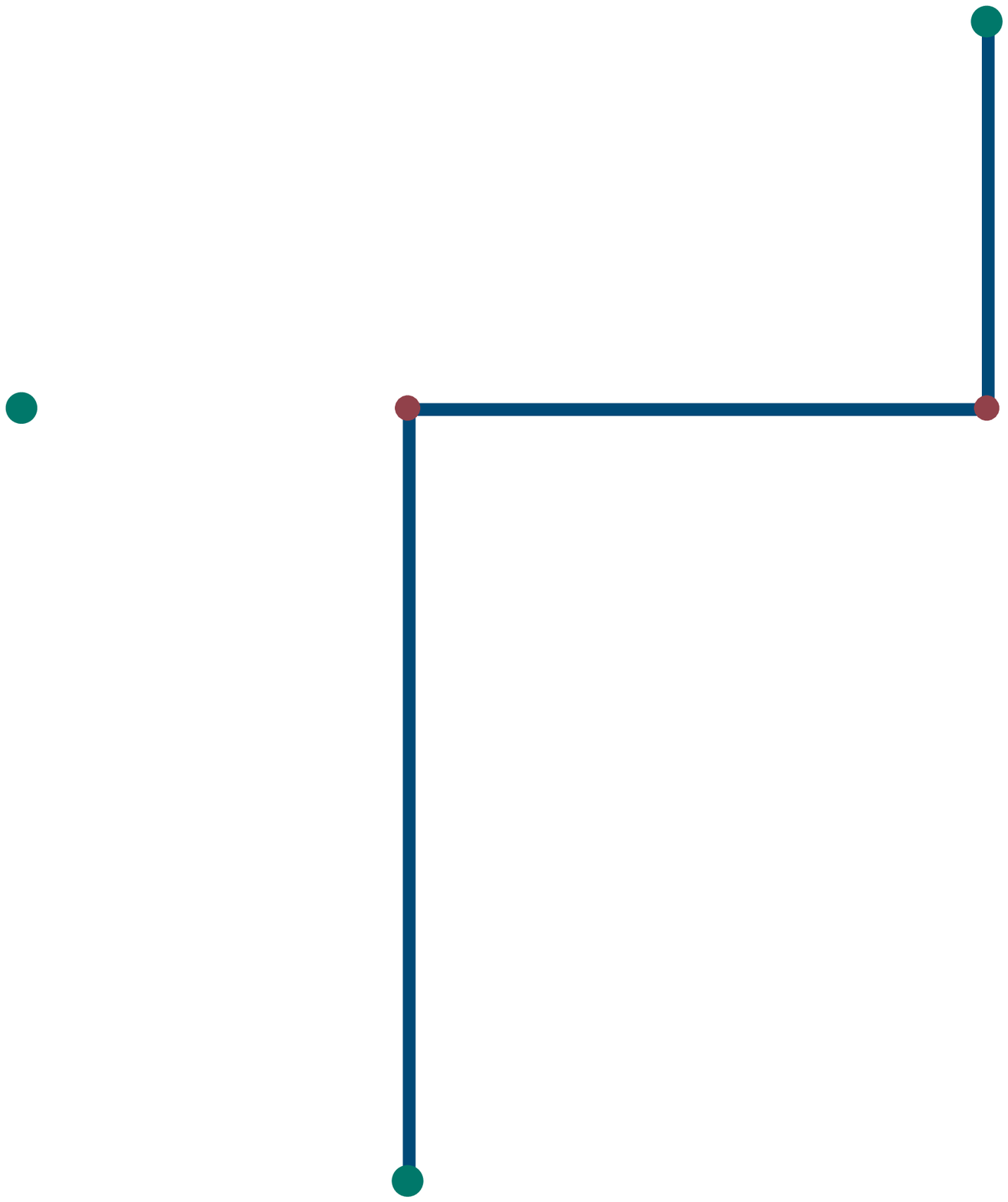}
\end{minipage}
\newline
\begin{minipage}[b]{0.45\linewidth}
\centering
a) The $3$-point set $A$
\newline
\end{minipage}
\hspace{1cm}
\begin{minipage}[b]{0.45\linewidth}
\centering
b) The ${\rm SPINE}$ of $A$
\newline
\end{minipage}
\newline

\begin{minipage}[b]{0.45\linewidth}
\centering
\includegraphics[width=0.7\linewidth]{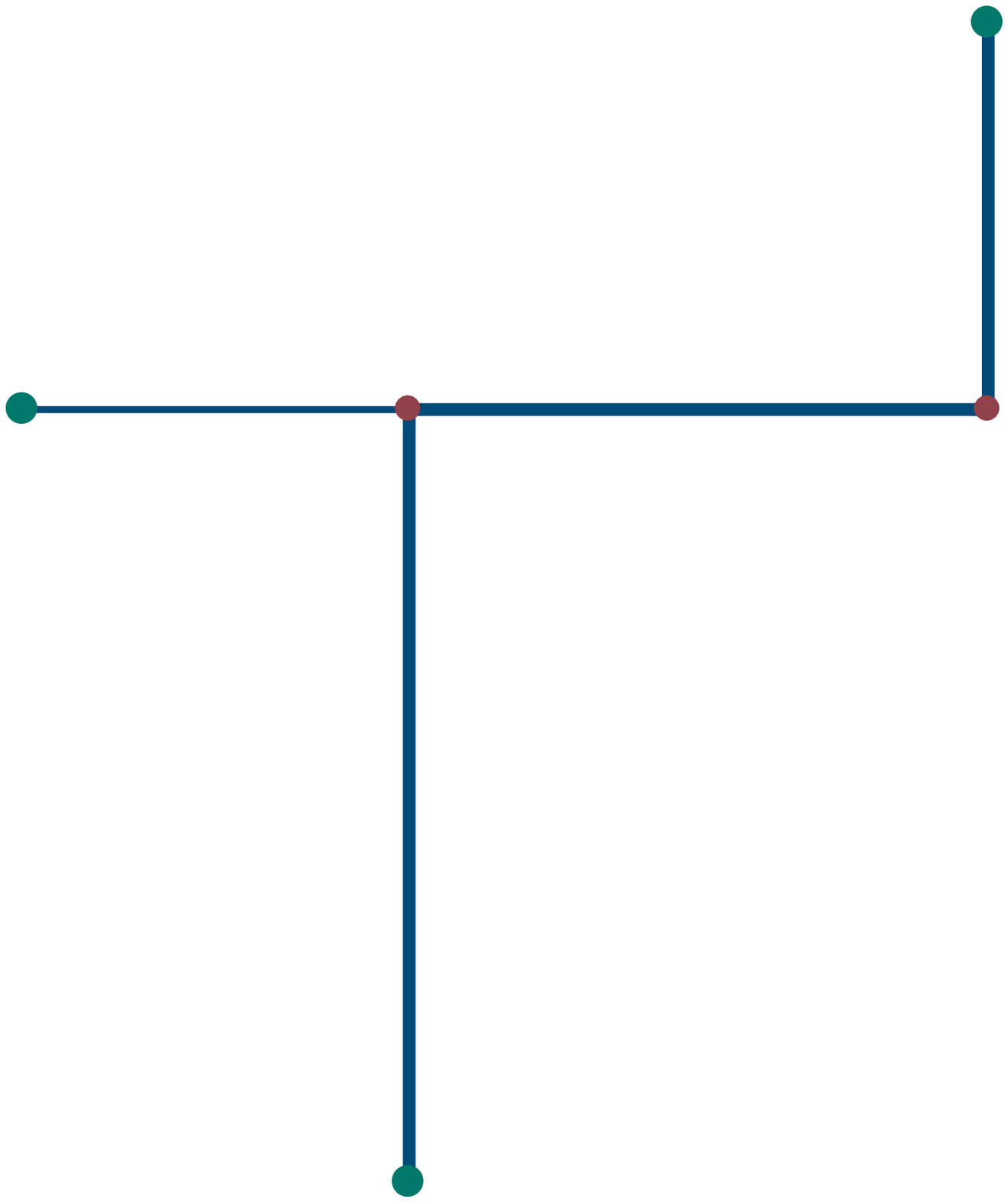}
\end{minipage}
\hspace{1cm}
\begin{minipage}[b]{0.45\linewidth}
\centering
\includegraphics[width=0.575\linewidth]{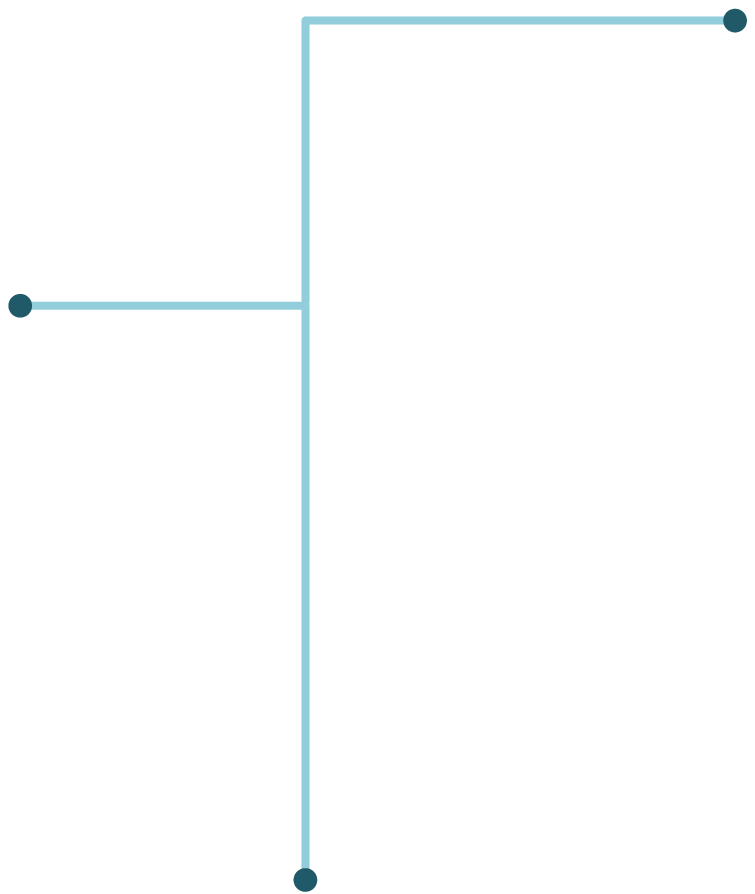}
\end{minipage}
\newline

\begin{minipage}[b]{0.45\linewidth}
\centering
c) The ${\rm SKELETON}$ and the tight span of $A$
\newline
\end{minipage}
\hspace{1cm}
\begin{minipage}[b]{0.45\linewidth}
\centering
d) Another embedding of the tight span of $A$
\newline
\end{minipage}
\newline

\begin{minipage}[b]{0.45\linewidth}
\centering
\includegraphics[width=0.575\linewidth]{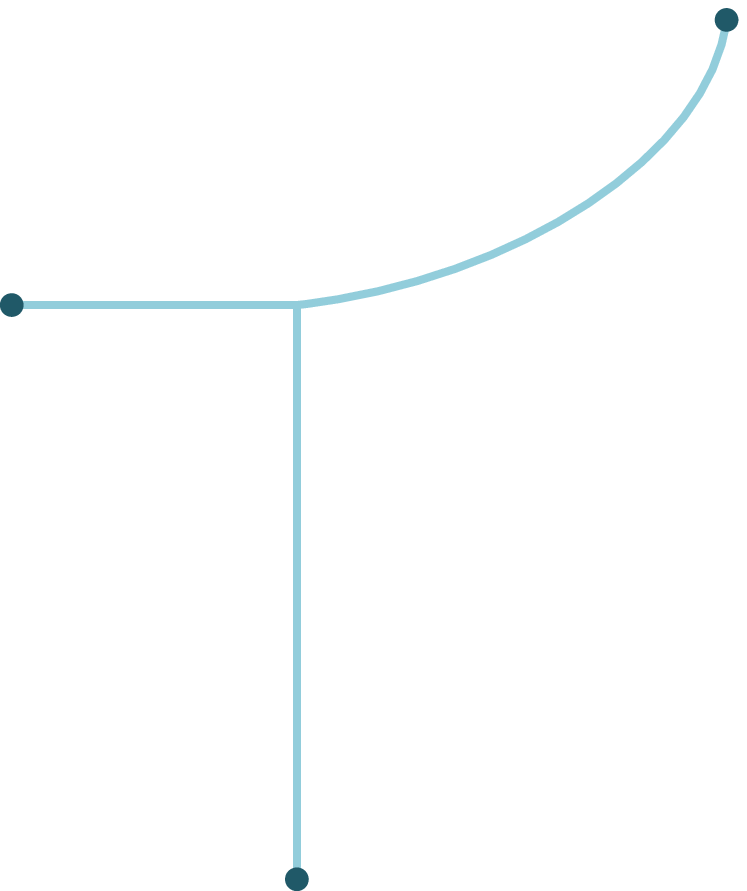}
\end{minipage}
\hspace{1cm}
\begin{minipage}[b]{0.45\linewidth}
\centering
\includegraphics[width=0.575\linewidth]{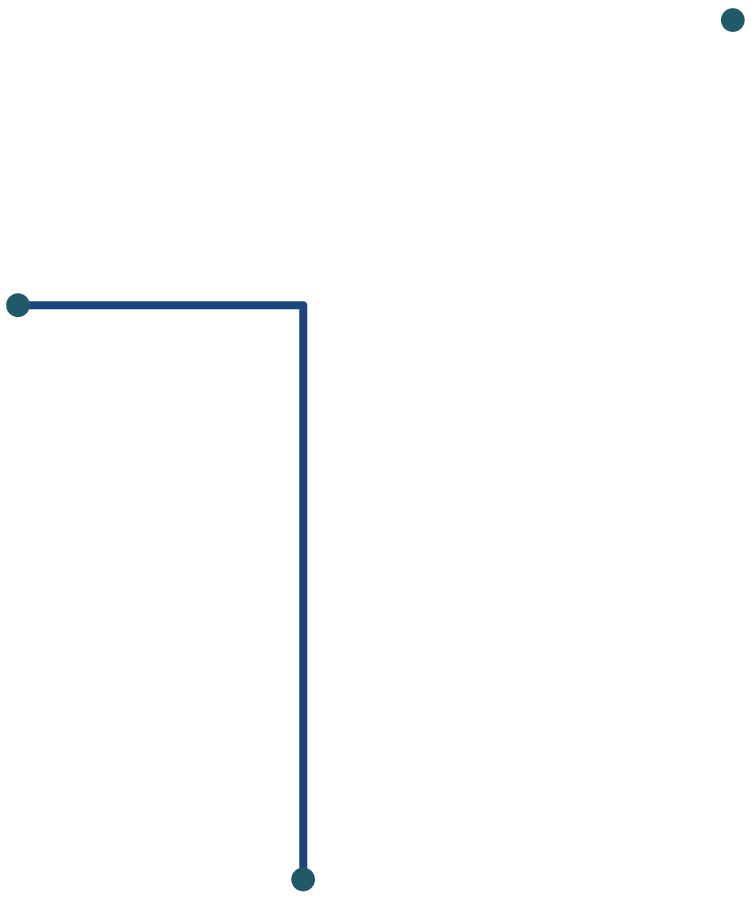}
\end{minipage}
\newline

\begin{minipage}[b]{0.45\linewidth}
\centering
e) Still another embedding of the tight span of $A$
\newline
\newline
\end{minipage}
\hspace{1cm}
\begin{minipage}[b]{0.45\linewidth}
\centering
f) The orthogonal hull of $A$ in the sense of Eppstein, which is not connected and therefore does not give the tight span of $A$
\end{minipage}
\newline

\caption{Tight span of a $3$-point set}\label{Fig:ed3points}
\end{figure}

\clearpage

\begin{example}
\mbox{  }
\begin{figure}[h!]
\begin{minipage}[b]{0.45\linewidth}
\centering
\includegraphics[width=0.75\linewidth]{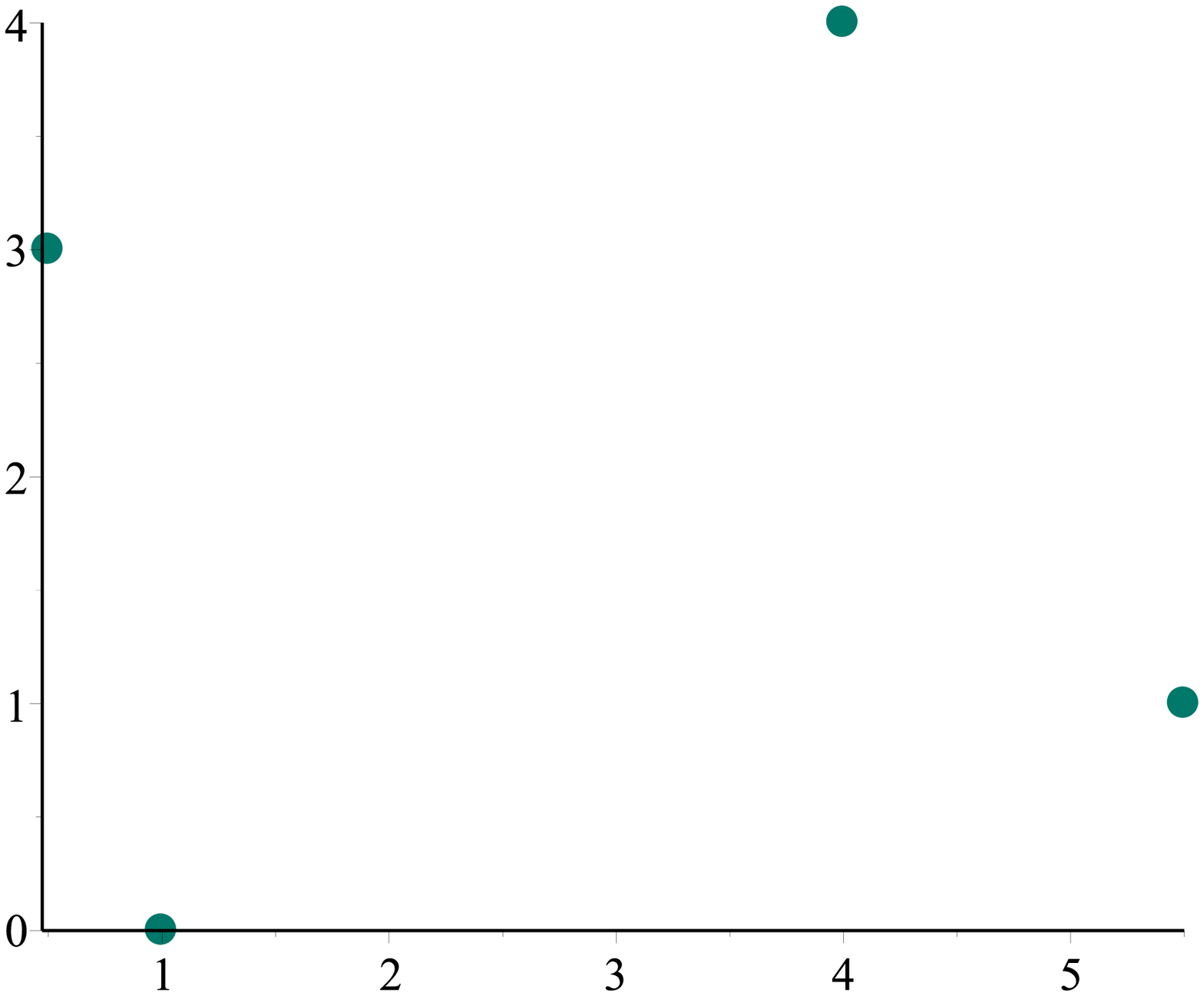}
\end{minipage}
\hspace{1cm}
\begin{minipage}[b]{0.45\linewidth}
\centering
\includegraphics[width=0.75\linewidth]{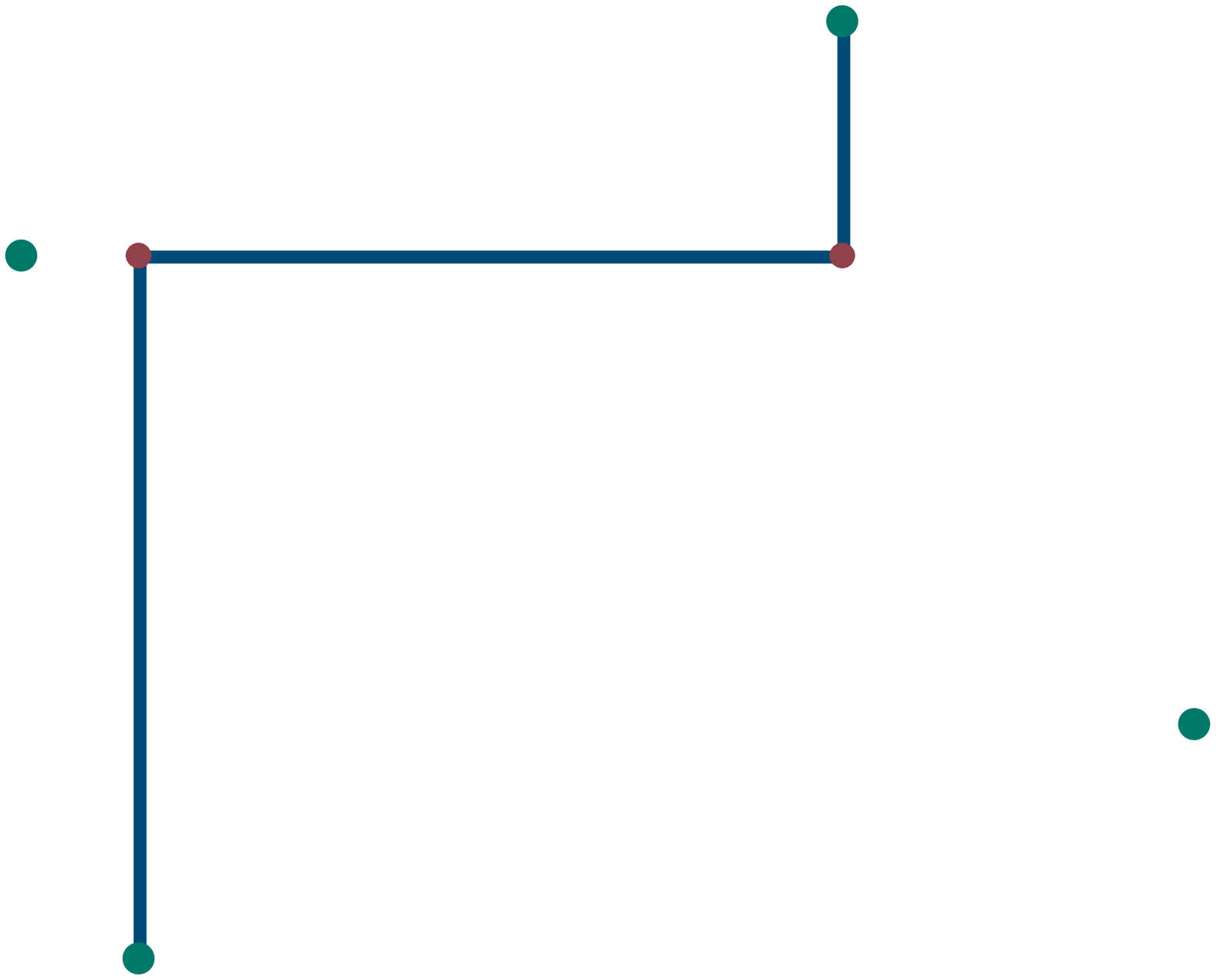}
\end{minipage}
\newline
\begin{minipage}[b]{0.45\linewidth}
\centering
(a) The $4$-point set $A$
\end{minipage}
\hspace{1cm}
\begin{minipage}[b]{0.45\linewidth}
\centering
(b) The {\rm SPINE} of $A$
\end{minipage}
\newline\newline
\begin{minipage}[b]{0.45\linewidth}
\centering
\includegraphics[width=0.75\linewidth]{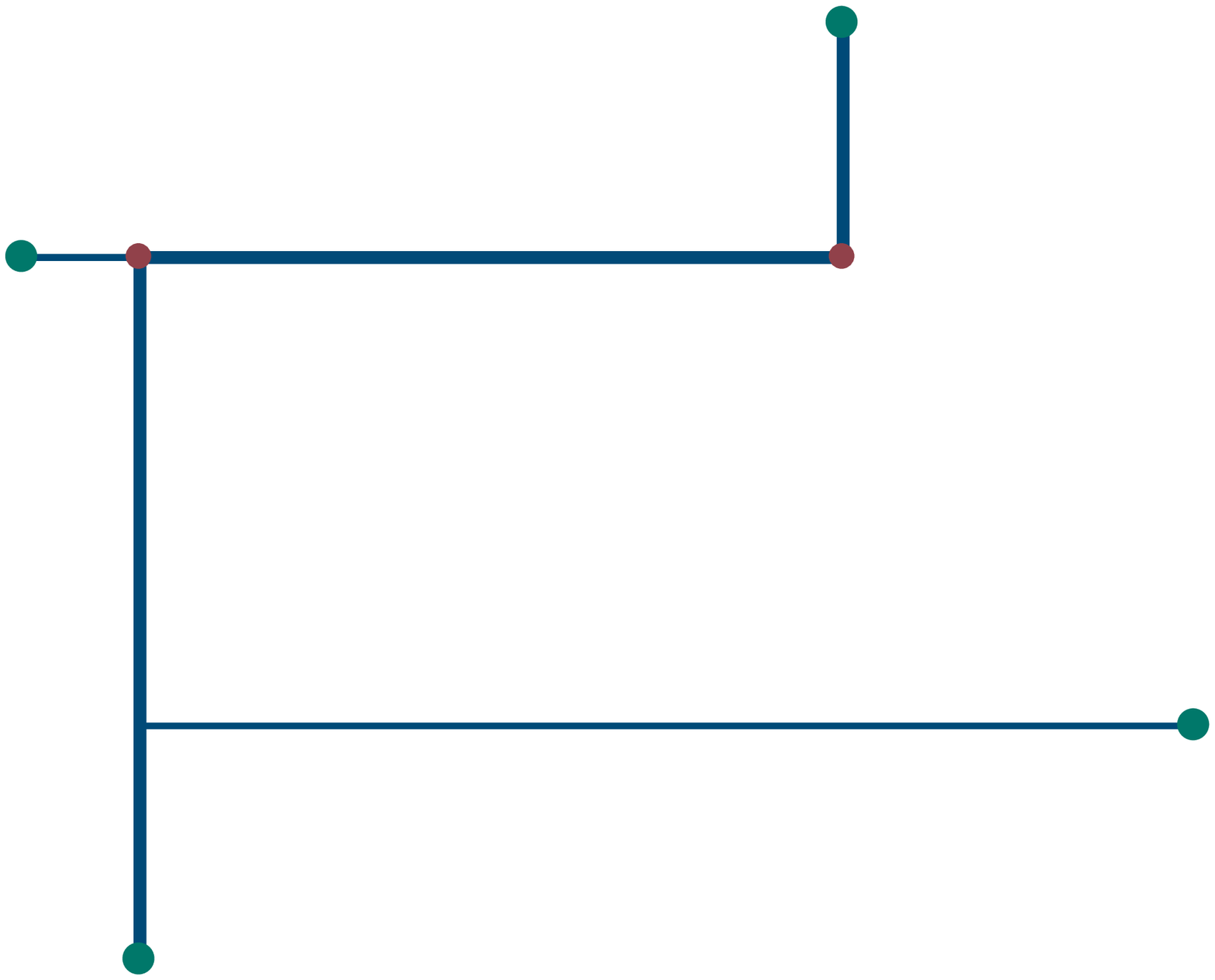}
\end{minipage}
\hspace{1cm}
\begin{minipage}[b]{0.45\linewidth}
\centering
\includegraphics[width=0.75\linewidth]{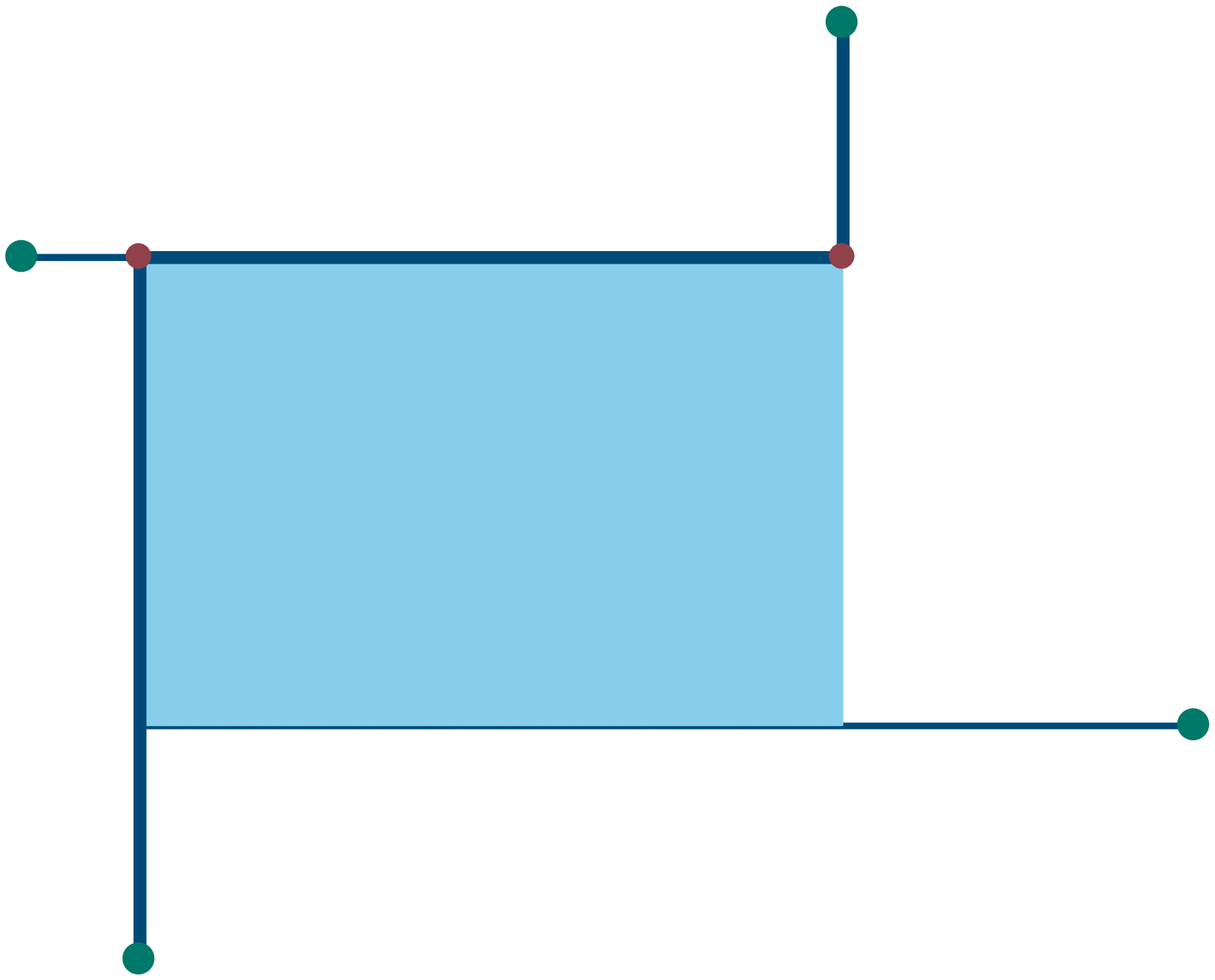}
\end{minipage}
\newline
\begin{minipage}[b]{0.45\linewidth}
\centering
(c) The {\rm SKELETON} of $A$
\newline
\end{minipage}
\hspace{1cm}
\begin{minipage}[b]{0.45\linewidth}
\centering
(d) The vertical hatching of the {\rm SKELETON}
\end{minipage}
\newline\newline
\begin{minipage}[b]{0.90\linewidth}
\centering
\includegraphics[width=0.375\linewidth]{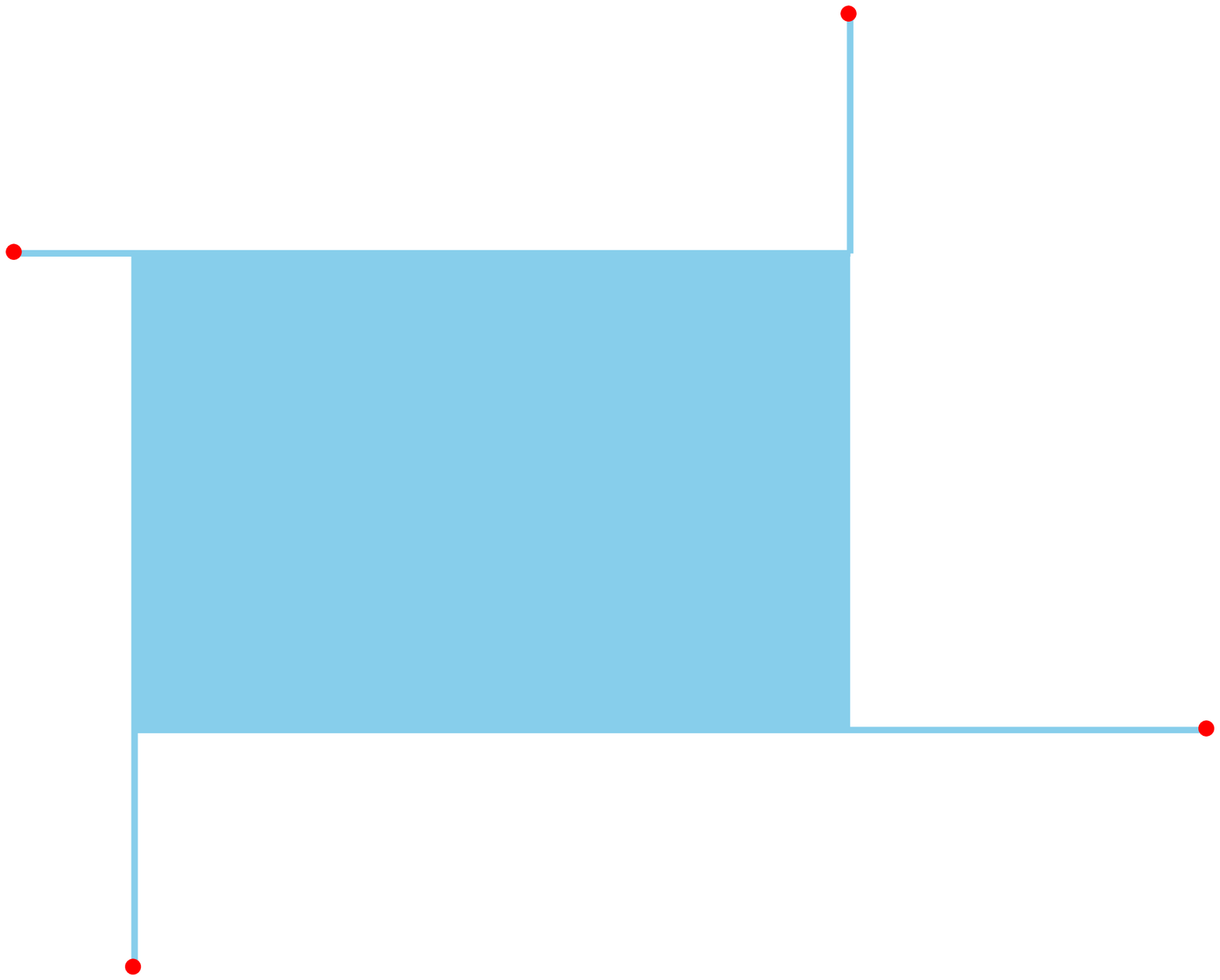}
\end{minipage}
\newline
\begin{minipage}[b]{0.95\linewidth}
\centering
(e) The tight span of the finite set $A$
\end{minipage}
\vspace{0.25cm}
\caption{Construction of the tight span of a $4$-point set given in Figure~\ref{Fig:4points}a}\label{Fig:4points}
\end{figure}
\end{example}

\clearpage

\begin{example}
\mbox{ }
\begin{figure}[h!]
\begin{minipage}[b]{0.45\linewidth}
\centering
\includegraphics[width=0.75\linewidth]{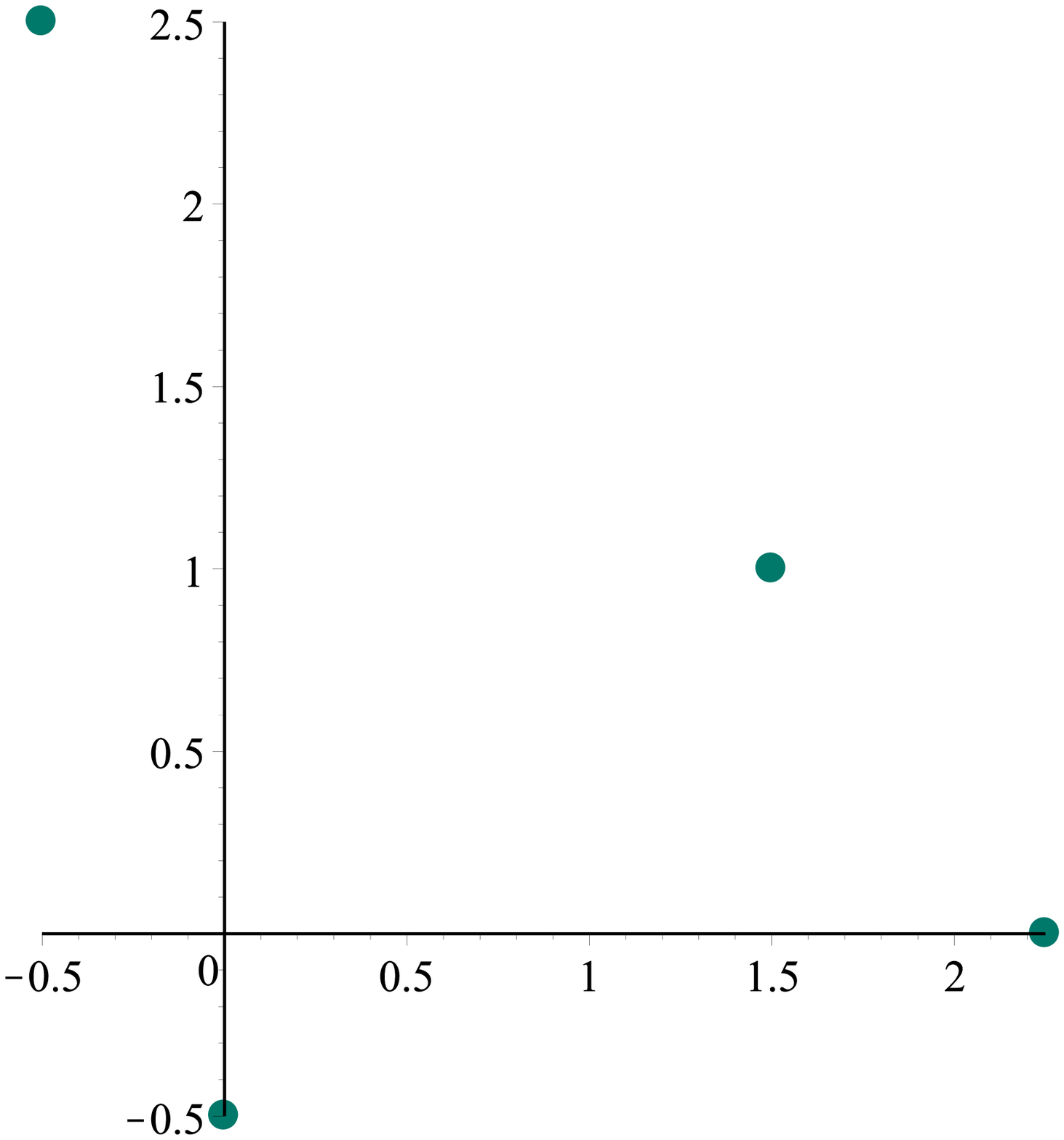}
\end{minipage}
\hspace{1cm}
\begin{minipage}[b]{0.45\linewidth}
\centering
\includegraphics[width=0.75\linewidth]{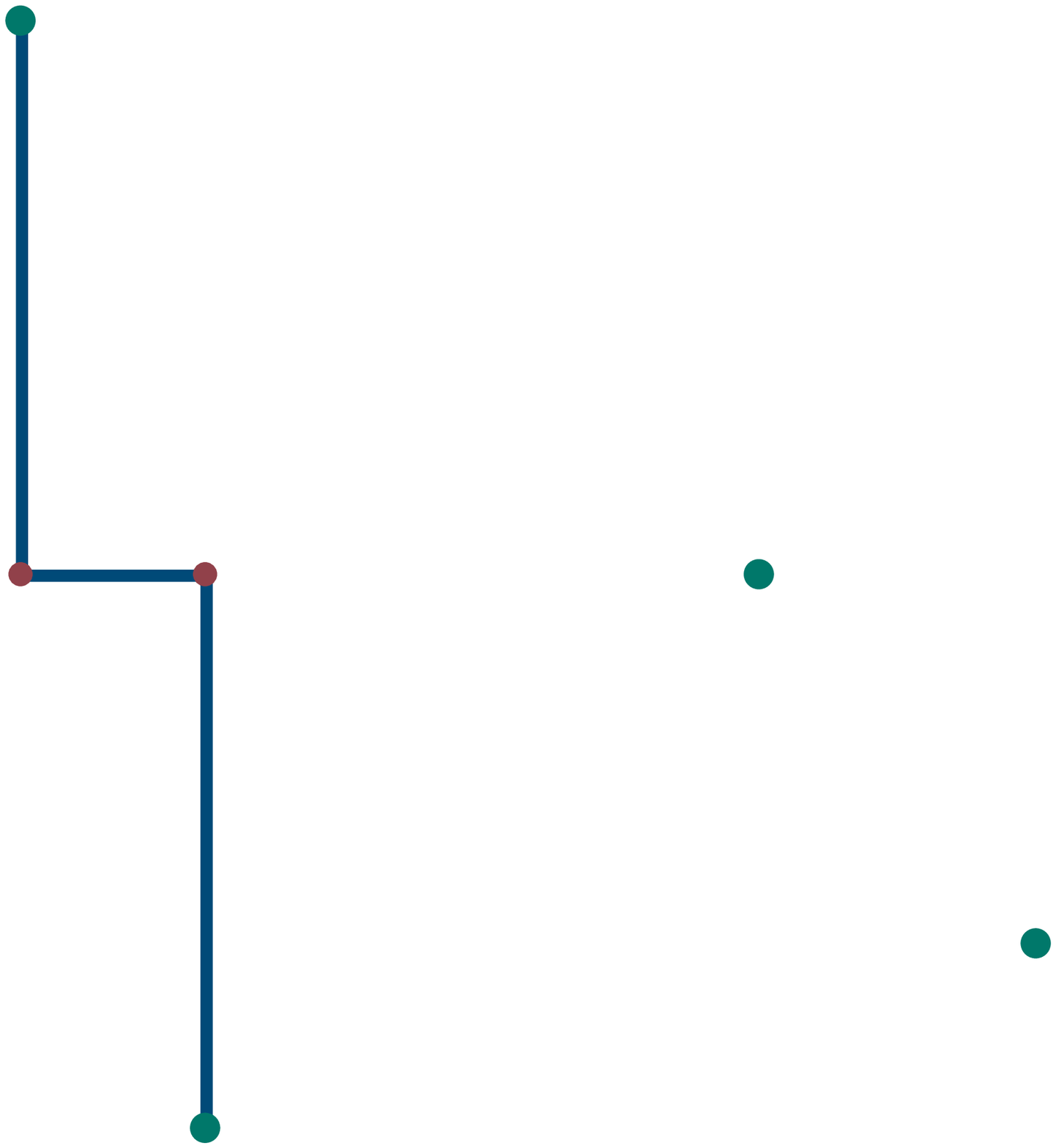}
\end{minipage}
\newline
\begin{minipage}[b]{0.45\linewidth}
\centering
(a) The $4$-point set $A$
\end{minipage}
\hspace{1cm}
\begin{minipage}[b]{0.45\linewidth}
\centering
(b) The {\rm SPINE} of $A$
\end{minipage}
\newline\newline
\begin{minipage}[b]{0.45\linewidth}
\centering
\includegraphics[width=0.75\linewidth]{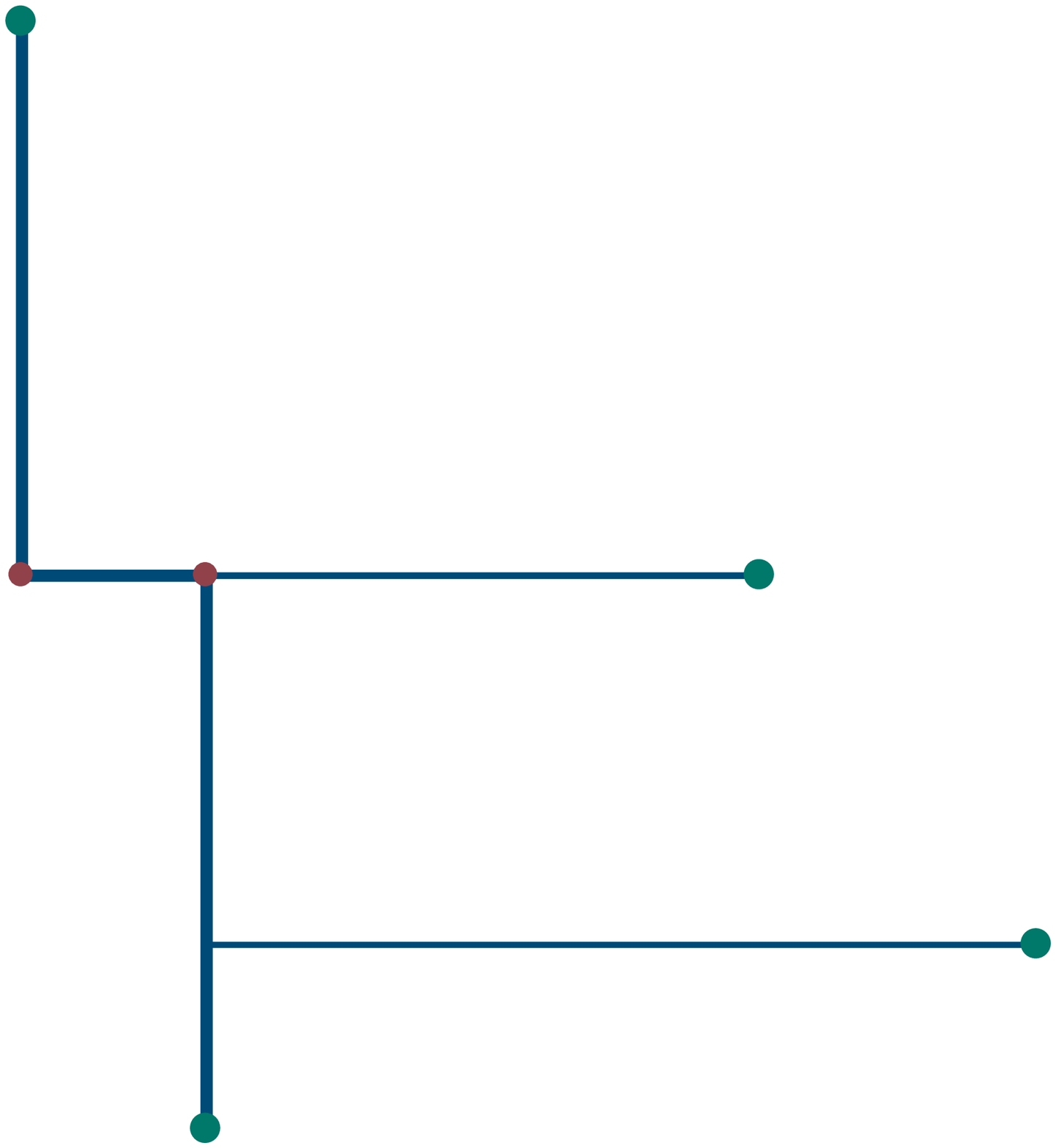}
\end{minipage}
\hspace{1cm}
\begin{minipage}[b]{0.45\linewidth}
\centering
\includegraphics[width=0.75\linewidth]{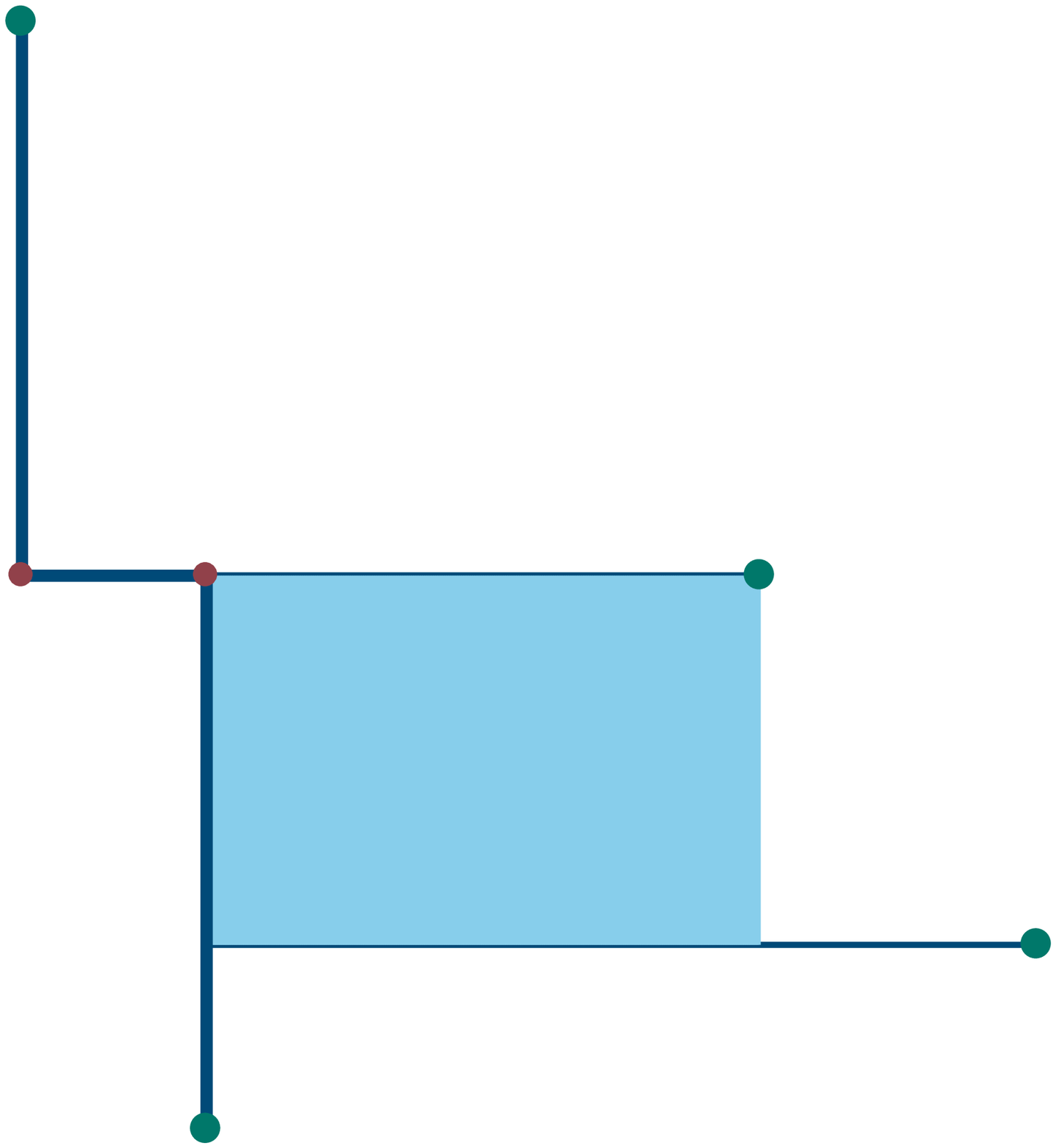}
\end{minipage}
\newline
\begin{minipage}[b]{0.45\linewidth}
\centering
(c) The {\rm SKELETON} of $A$
\newline
\end{minipage}
\hspace{1cm}
\begin{minipage}[b]{0.45\linewidth}
\centering
(d) The vertical hatching of the {\rm SKELETON}
\end{minipage}
\newline\newline
\begin{minipage}[b]{0.90\linewidth}
\centering
\includegraphics[width=0.375\linewidth]{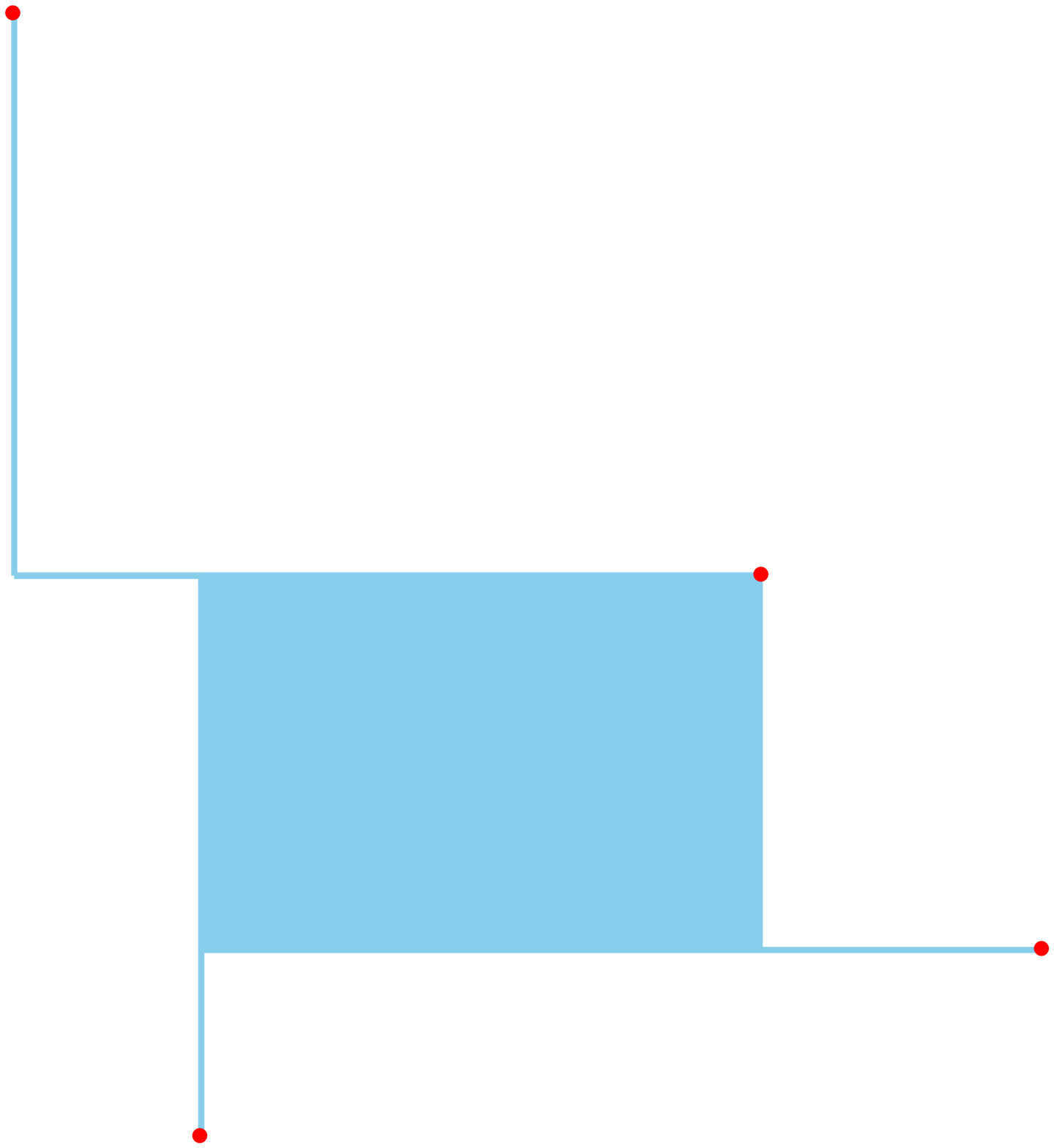}
\end{minipage}
\newline
\begin{minipage}[b]{0.95\linewidth}
\centering
(e) The tight span of the finite set $A$
\end{minipage}
\vspace{0.25cm}
\caption{Construction of the tight span of another $4$-point set given in Figure~\ref{Fig:ed4points}a}\label{Fig:ed4points}
\end{figure}
\end{example}

\clearpage

\begin{example}
\mbox{ }
\begin{figure}[h!]
\begin{minipage}[b]{0.45\linewidth}
\centering
\includegraphics[width=0.75\linewidth]{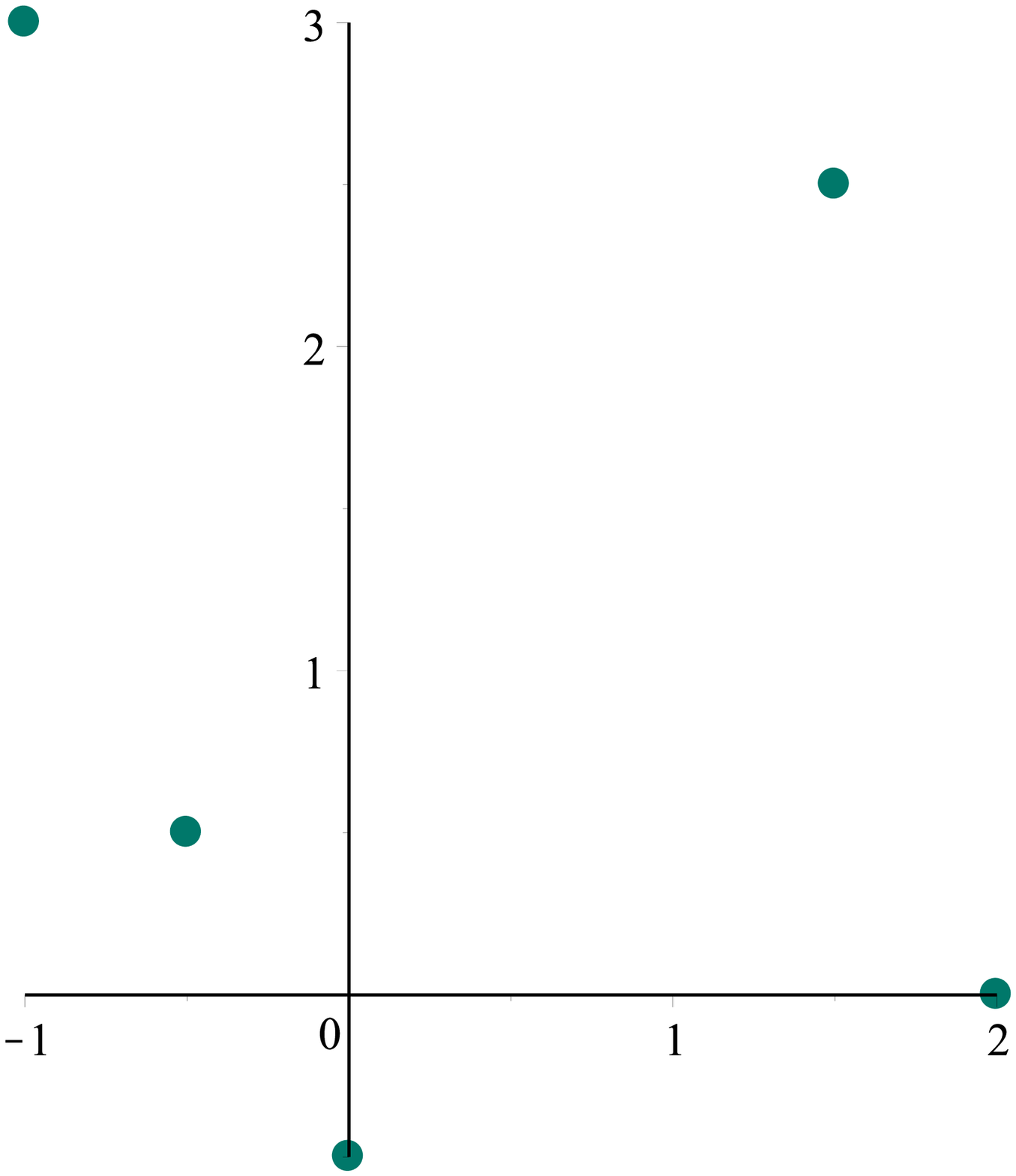}
\end{minipage}
\hspace{1cm}
\begin{minipage}[b]{0.45\linewidth}
\centering
\includegraphics[width=0.75\linewidth]{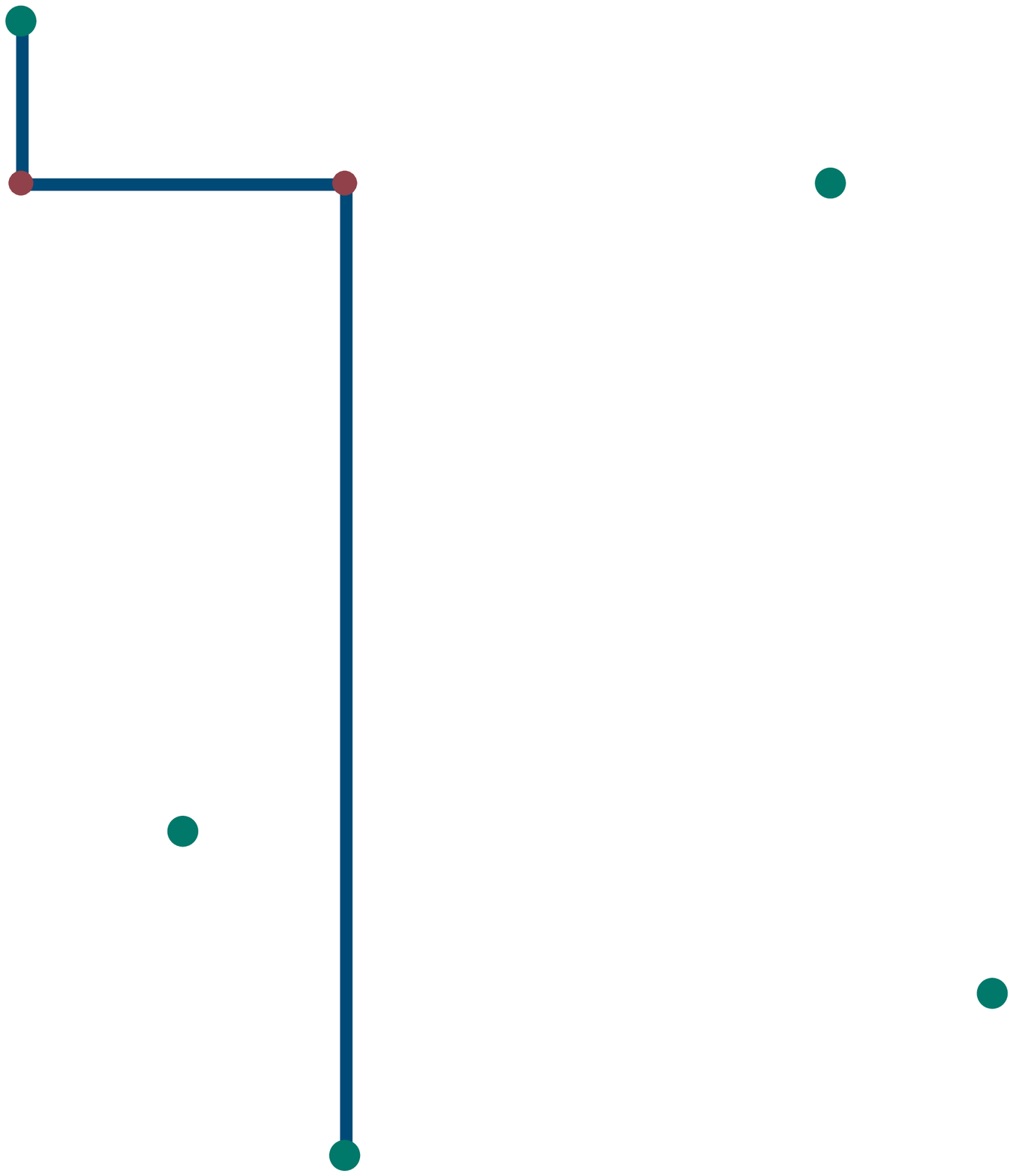}
\end{minipage}
\newline
\begin{minipage}[b]{0.45\linewidth}
\centering
(a) The $5$-point set $A$
\end{minipage}
\hspace{1cm}
\begin{minipage}[b]{0.45\linewidth}
\centering
(b) The {\rm SPINE} of $A$
\end{minipage}
\newline\newline
\begin{minipage}[b]{0.45\linewidth}
\centering
\includegraphics[width=0.75\linewidth]{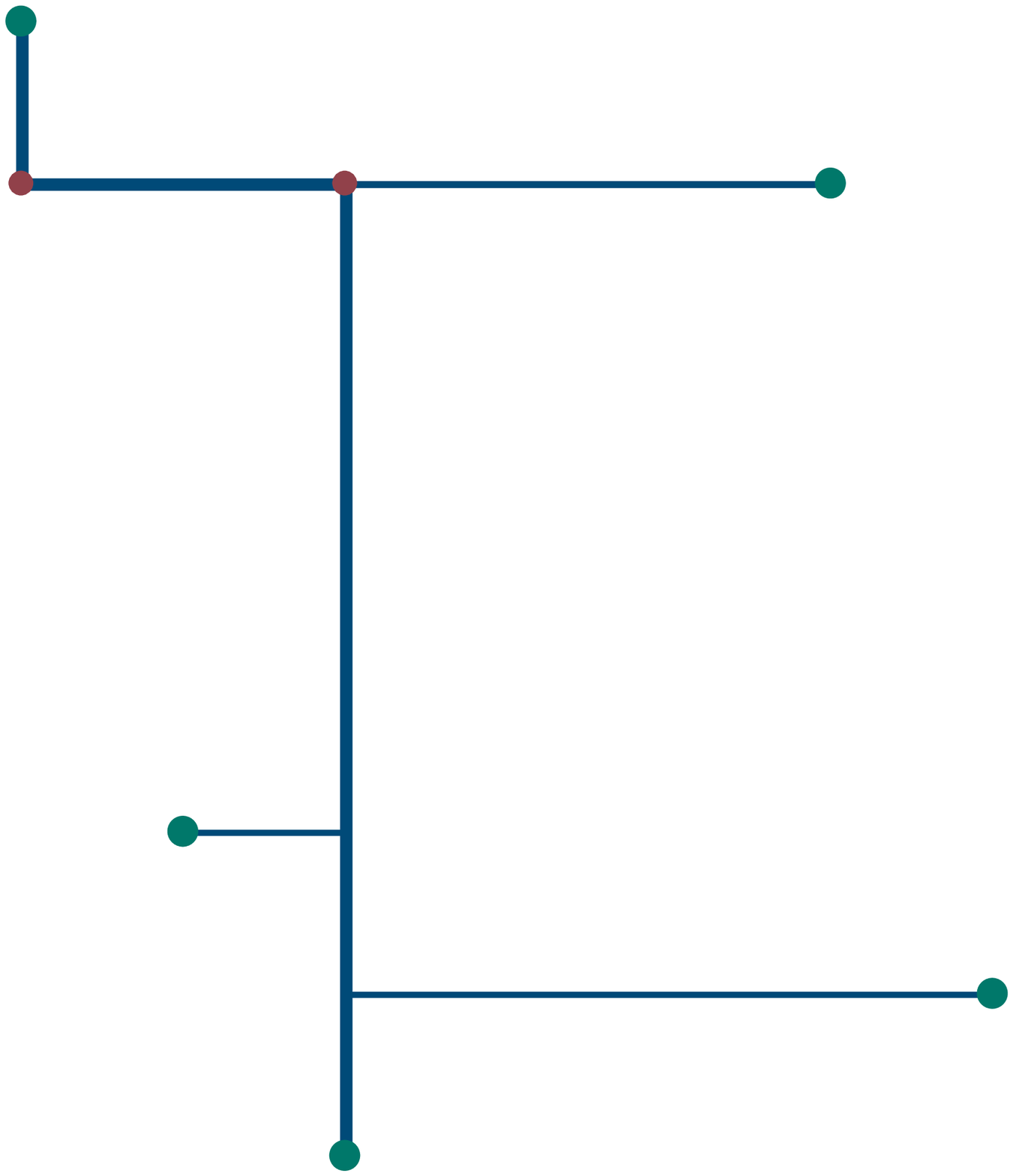}
\end{minipage}
\hspace{1cm}
\begin{minipage}[b]{0.45\linewidth}
\centering
\includegraphics[width=0.75\linewidth]{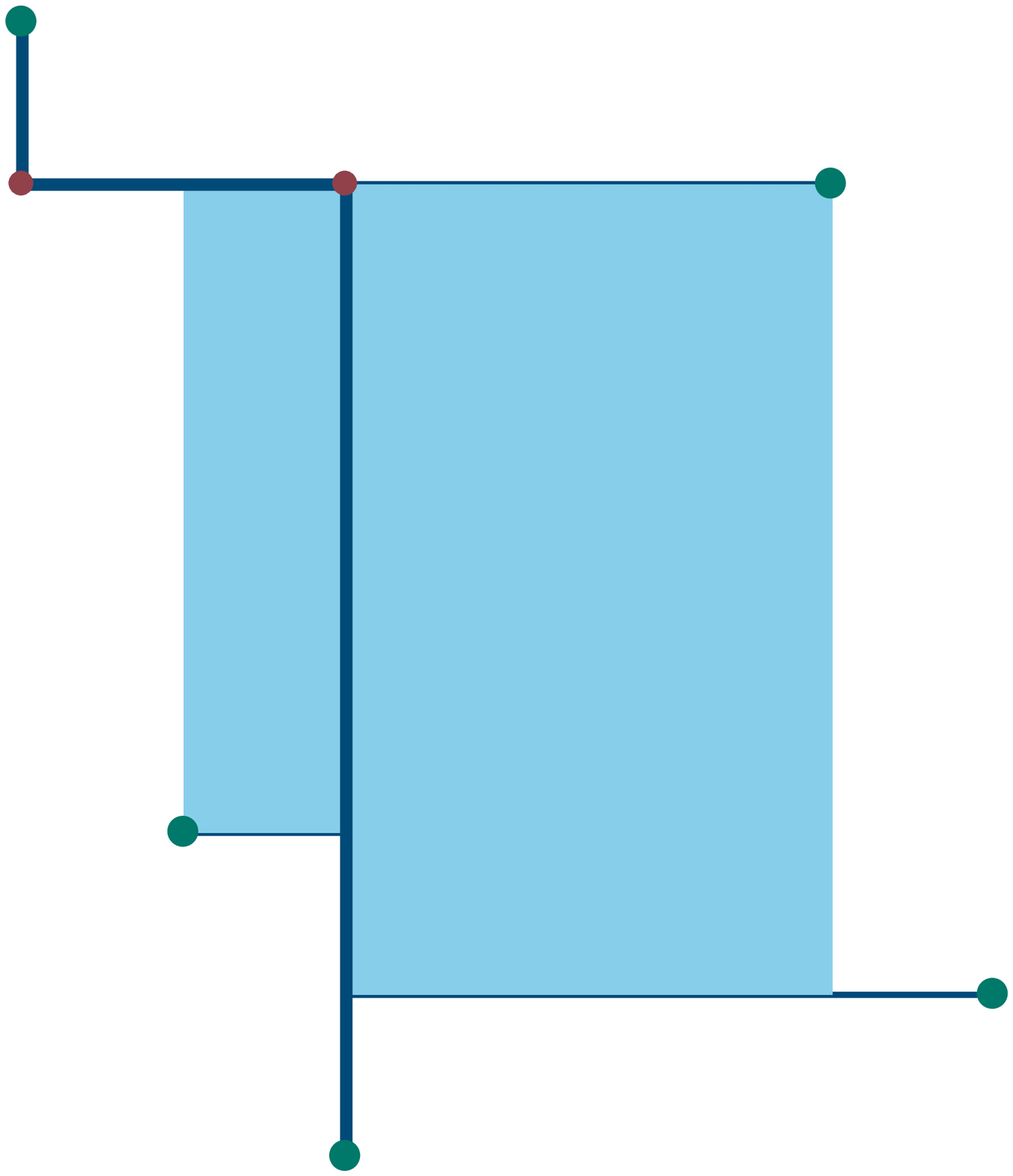}
\end{minipage}
\newline
\begin{minipage}[b]{0.45\linewidth}
\centering
(c) The {\rm SKELETON} of $A$
\newline
\end{minipage}
\hspace{1cm}
\begin{minipage}[b]{0.45\linewidth}
\centering
(d) The vertical hatching of the {\rm SKELETON}
\end{minipage}
\newline\newline
\begin{minipage}[b]{0.90\linewidth}
\centering
\includegraphics[width=0.375\linewidth]{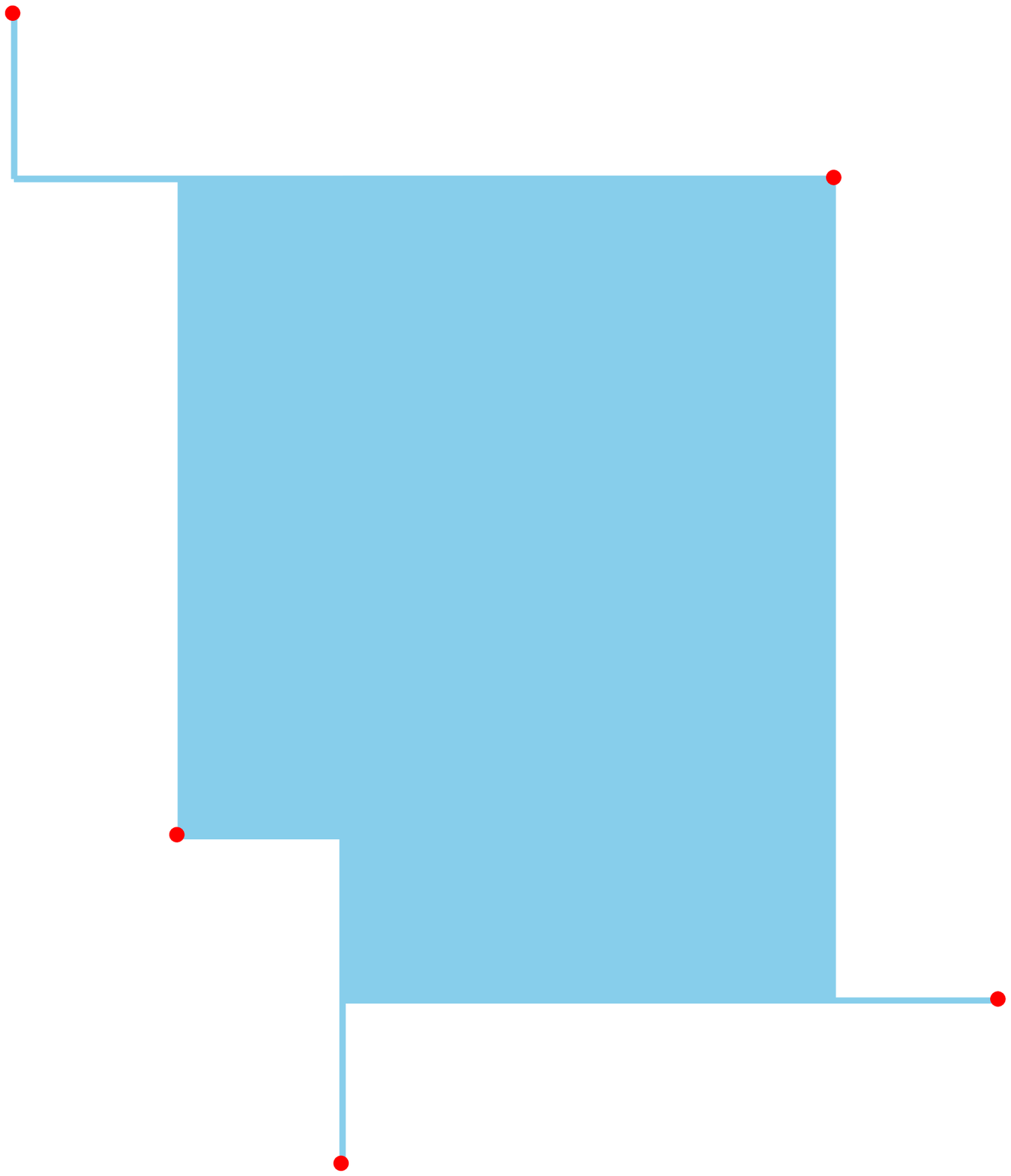}
\end{minipage}
\newline
\begin{minipage}[b]{0.95\linewidth}
\centering
(e) The tight span of the finite set $A$
\end{minipage}
\vspace{0.25cm}
\caption{Construction of the tight span of a $5$-point set given in Figure~\ref{Fig:ed5points}a}\label{Fig:ed5points}
\end{figure}
\end{example}

\clearpage
\begin{example}
\mbox{ }
\begin{figure}[h!]
\begin{minipage}[b]{0.45\linewidth}
\centering
\includegraphics[width=0.75\linewidth]{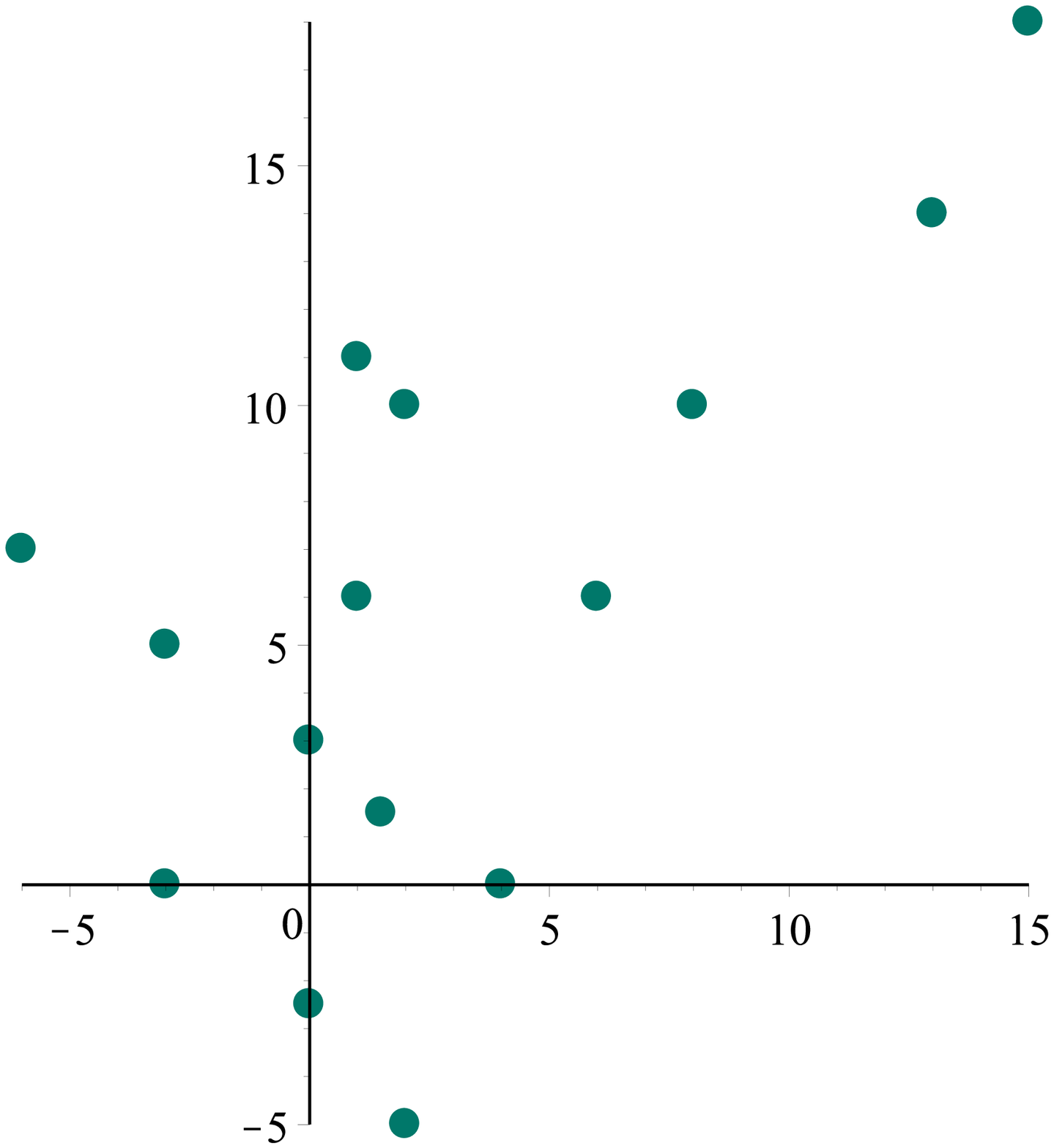}
\end{minipage}
\hspace{1cm}
\begin{minipage}[b]{0.45\linewidth}
\centering
\includegraphics[width=0.75\linewidth]{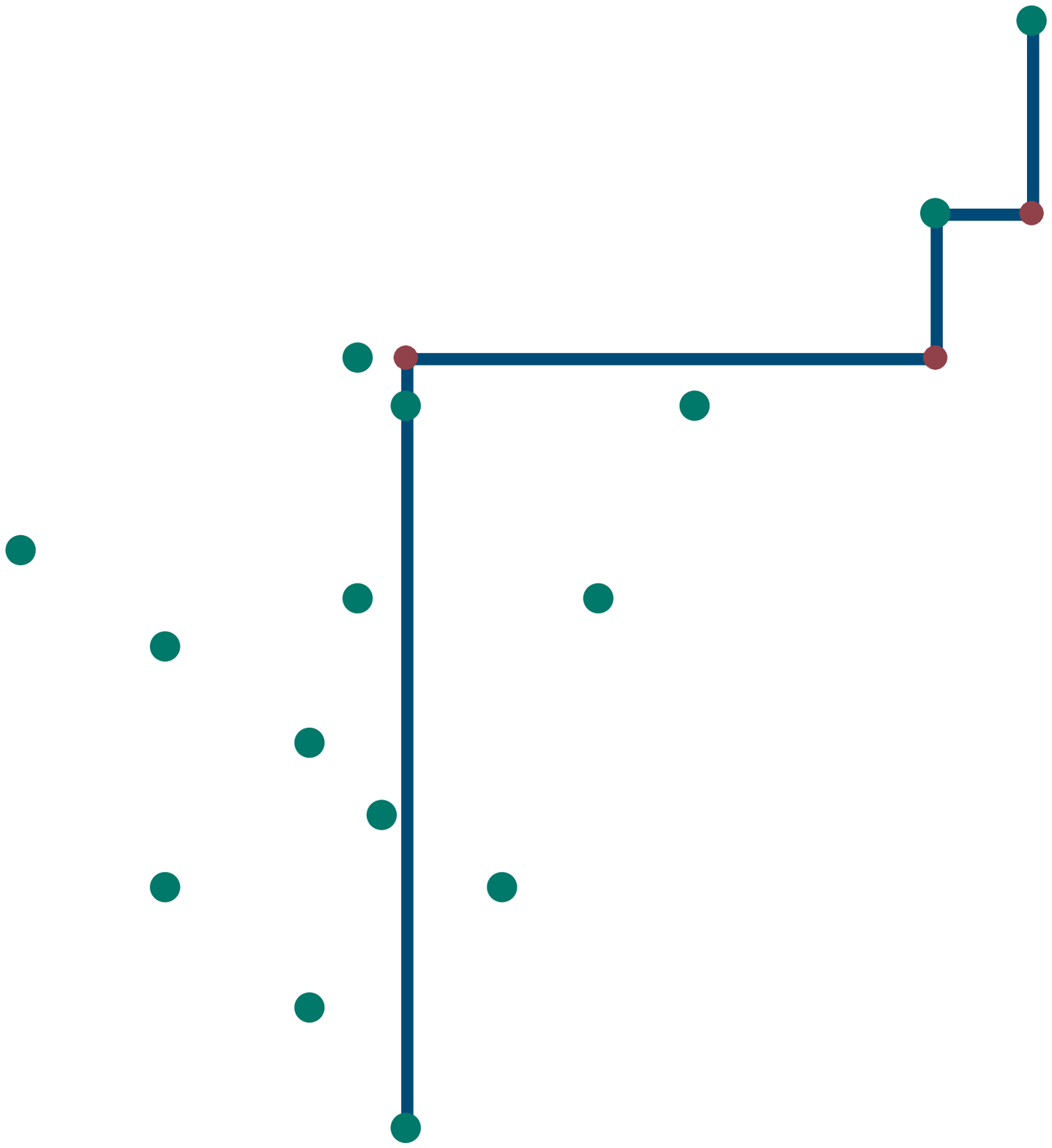}
\end{minipage}
\newline
\begin{minipage}[b]{0.45\linewidth}
\centering
(a) The $15$-point set $A$
\end{minipage}
\hspace{1cm}
\begin{minipage}[b]{0.45\linewidth}
\centering
(b) The {\rm SPINE} of $A$
\end{minipage}
\newline\newline
\begin{minipage}[b]{0.45\linewidth}
\centering
\includegraphics[width=0.75\linewidth]{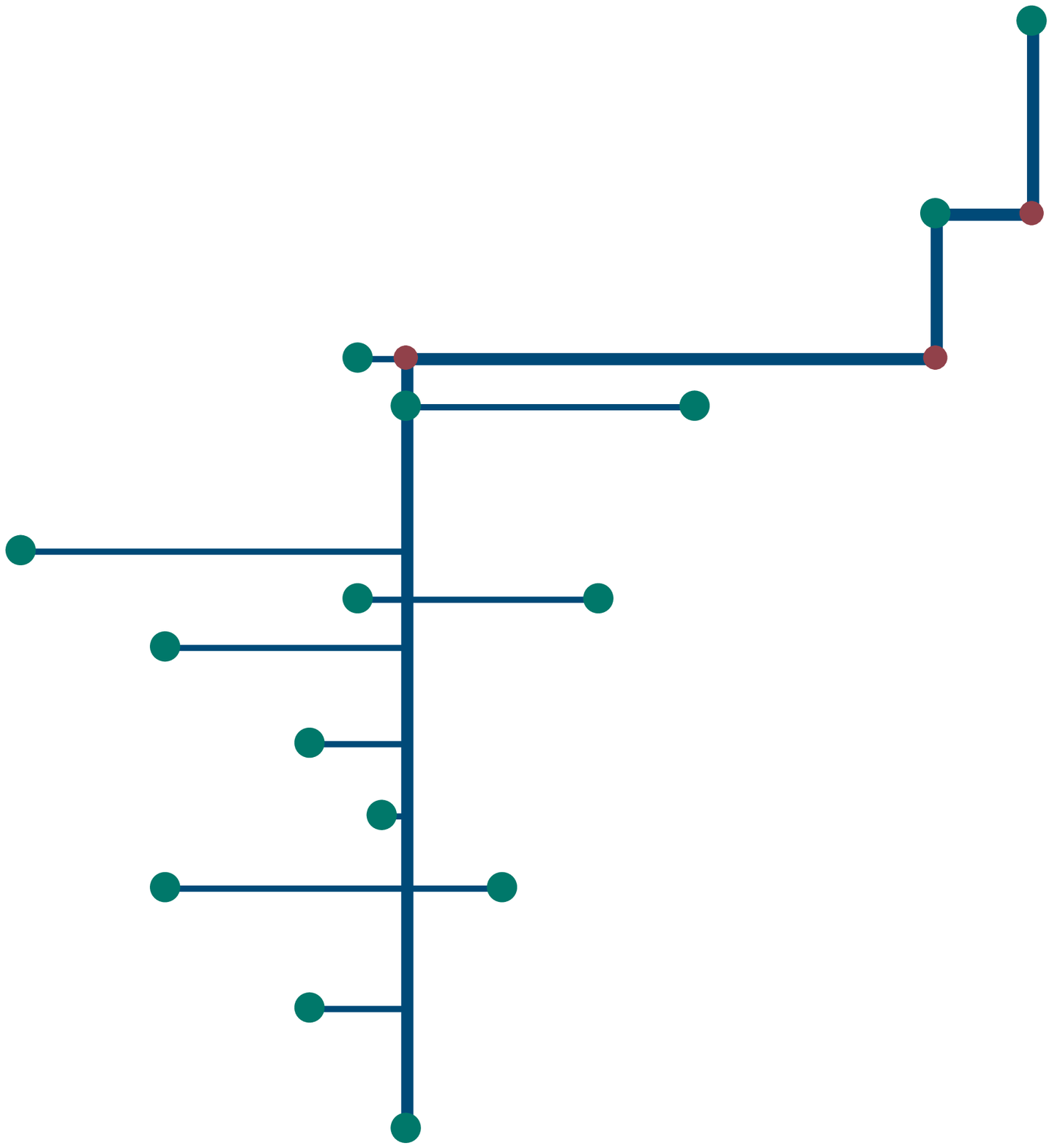}
\end{minipage}
\hspace{1cm}
\begin{minipage}[b]{0.45\linewidth}
\centering
\includegraphics[width=0.75\linewidth]{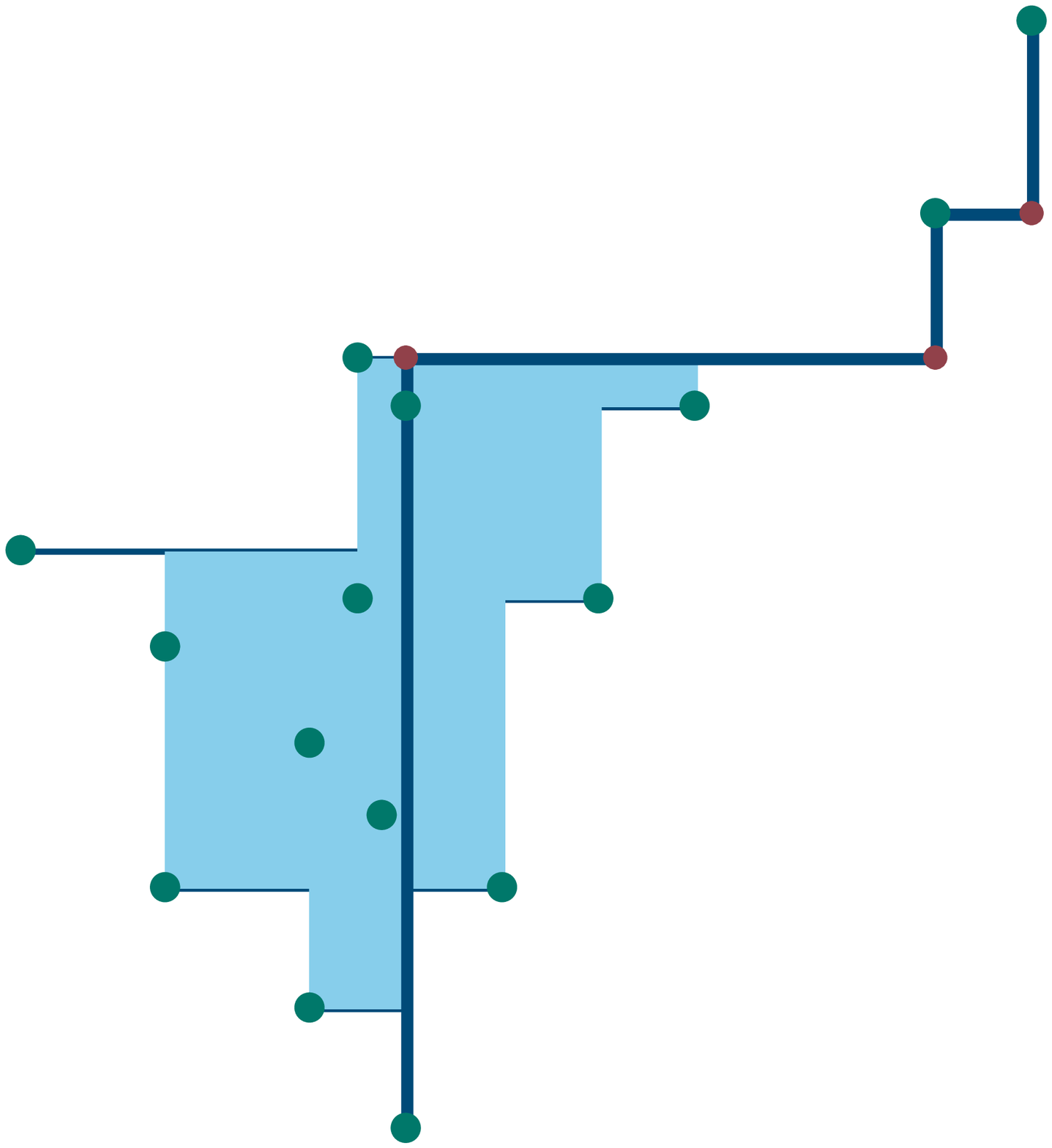}
\end{minipage}
\newline
\begin{minipage}[b]{0.45\linewidth}
\centering
(c) The {\rm SKELETON} of $A$
\newline
\end{minipage}
\hspace{1cm}
\begin{minipage}[b]{0.45\linewidth}
\centering
(d) The vertical hatching of the {\rm SKELETON}
\end{minipage}
\newline\newline
\begin{minipage}[b]{0.90\linewidth}
\centering
\includegraphics[width=0.375\linewidth]{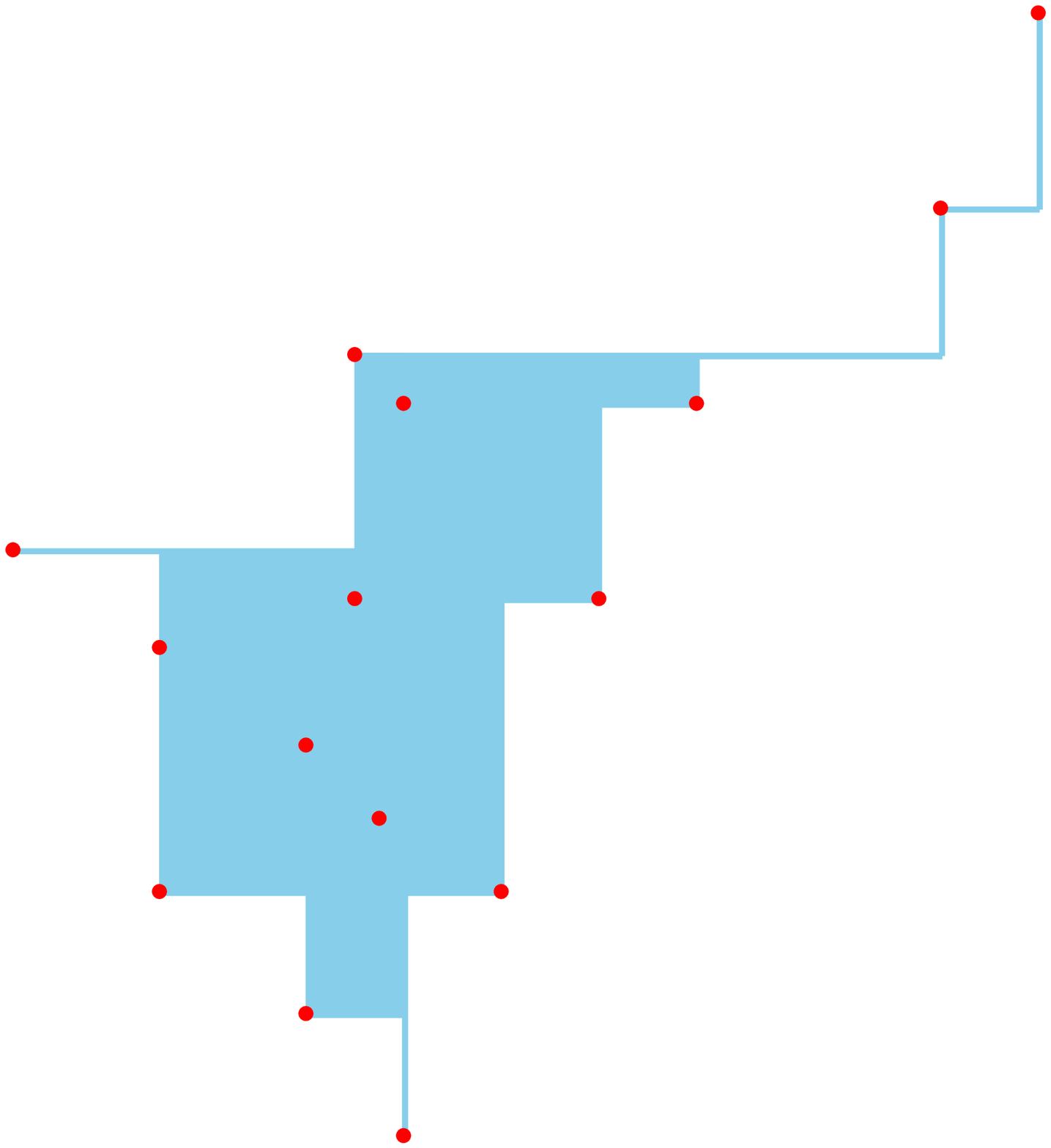}
\end{minipage}
\newline
\begin{minipage}[b]{0.95\linewidth}
\centering
(e) The tight span of the finite set $A$
\end{minipage}
\vspace{0.25cm}
\caption{Construction of the tight span of a $15$-point set given in Figure~\ref{Fig:ed15points}a}\label{Fig:ed15points}
\end{figure}
\end{example}

\clearpage

\section*{Appendix}
We give below the Maple 2015 source-code of the program which generates the tight span of a given finite set step by step according to the algorithm given in Therem~\ref{sn2}.
\bigskip

\begin{maplegroup}
\begin{mapleinput}
\mapleinline{active}{1d}{restart: with(plots): with(plottools):

# SET THE NUMBER OF POINTS OF YOUR FINITE SET AND INPUT THE POINTS OF YOUR FINITE SET (AS (x[i,0],y[i,0])) AND RUN THE PROGRAM.

numberofpoints:=5:
x[1,0]:=0: y[1,0]:=-0.5:
x[2,0]:=-1: y[2,0]:=3:
x[3,0]:=1.5: y[3,0]:=2.5:
x[4,0]:=2: y[4,0]:=0:
x[5,0]:=-0.5: y[5,0]:=0.5:

\bigskip
# DEFINING PROCEDURES
MinOrdinatePoint:=proc(A)
local t, minordinate, xmin, ymin, MinOrdSet;
minordinate:=min(seq(A[i][2],i=1..nops(A))): MinOrdSet:=\{\}:
for t in A do
  if t[2]=minordinate then MinOrdSet:=MinOrdSet union \{t\}: fi;
od;
xmin:=min(seq(MinOrdSet[i][1],i=1..nops(MinOrdSet))): xmin,minordinate;
end proc:
\bigskip
MinApsisPoint:=proc(A)
local t, minapsis, xmin, ymin;
minapsis:=min(seq(A[i][1],i=1..nops(A))):
for t in A do
  if t[1]=minapsis then xmin:=t[1]: ymin:=t[2]: break; fi;
od; xmin,ymin;
end proc:
\bigskip
MaxApsisPoint:=proc(A)
local t, maxapsis, xmax, ymax;
maxapsis:=max(seq(A[i][1],i=1..nops(A))):
for t in A do
  if t[1]=maxapsis then xmax:=t[1]: ymax:=t[2]: break; fi;
od; xmax,ymax;
end proc:
\bigskip
PositionCode:=proc(x0,y0,A)
local left, right,  pc, t;
left:=0: right:=0:
for t in  A minus \{[x0,y0]\} do
if x0>=t[1] and y0<t[2] then left:=1; fi;
od;
for t in  A minus \{[x0,y0]\} do
if x0<=t[1] and y0<t[2] then right:=1; fi;
od;
if left=1 and right=1 then pc:=3:
elif  left=0 and right=1 then pc:=2:
elif  left=1 and right=0 then pc:=1:
else pc:=0:
fi:
pc;
end proc:
}{}
\end{mapleinput}

\mapleresult
\end{maplegroup}

\begin{maplegroup}
\begin{mapleinput}
\mapleinline{active}{1d}{

ULeft:=proc(x0,y0,A)
local a,s,i,t;
i:=0:
for t in  A do
  if x0>=t[1] and y0<t[2] then  i:=i+1: a[i]:=t: fi;
od:
\{seq(a[s],s=1..i)\};
end proc:
\bigskip
URight:=proc(x0,y0,A)
local a,s,i,t;
i:=0:
for t in  A do
  if x0<=t[1] and y0<t[2] then  i:=i+1: a[i]:=t: fi;
od:
\{seq(a[s],s=1..i)\};
end proc:
\bigskip
IsUpperNonempty:=proc(x0,y0,A)
local  t;
if max(seq(t[2]-y0,t in A))>0 then true; else false; fi;
end proc:
\bigskip
DrawSpine:=proc(x0,y0,A)
global x,y; local go, go1, go2, t;
if PositionCode(x0,y0,A)=0 then
  x:=x0; y:=y0;
  pointplot(\{(x0,y0)\},color="DeepBlue",symbol=solidcircle,symbolsize=15):
  break:
elif PositionCode(x0,y0,A)=1 then
  go:=min(seq(x0-t[1],t in A minus \{[x0,y0]\})): x:=x0-go; y:=y0;
  line([x0,y0],[x,y],color="DeepBlue",thickness=4);
elif PositionCode(x0,y0,A)=2 then
  go:=min(seq(t[1]-x0,t in A minus \{[x0,y0]\})): x:=x0+go; y:=y0;
  line([x0,y0],[x,y],color="DeepBlue",thickness=4);
elif PositionCode(x0,y0,A)=3 then
  go1:=max(seq(t[2]-y0,t in ULeft(x0,y0,A))):
  go2:=max(seq(t[2]-y0,t in URight(x0,y0,A))):
  go:=min(go1,go2): x:=x0; y:=y0+go;
  line([x0,y0],[x,y],color="DeepBlue",thickness=4);
fi:
end proc:
\bigskip
DrawSpineNormal:=proc(x0,y0,A)
global x,y; local go, go1, go2, t;
if PositionCode(x0,y0,A)=0 then
  x:=x0; y:=y0;
  pointplot(\{(x0,y0)\},color="SkyBlue",symbol=solidcircle,symbolsize=15):
  break:
elif PositionCode(x0,y0,A)=1 then
  go:=min(seq(x0-t[1],t in A minus \{[x0,y0]\})): x:=x0-go; y:=y0;
  line([x0,y0],[x,y],color="SkyBlue",thickness=2);
elif PositionCode(x0,y0,A)=2 then
  go:=min(seq(t[1]-x0,t in A minus \{[x0,y0]\})): x:=x0+go; y:=y0;
  line([x0,y0],[x,y],color="SkyBlue",thickness=2);
elif PositionCode(x0,y0,A)=3 then
  go1:=max(seq(t[2]-y0,t in ULeft(x0,y0,A))):
  go2:=max(seq(t[2]-y0,t in URight(x0,y0,A))):
  go:=min(go1,go2): x:=x0; y:=y0+go;
  line([x0,y0],[x,y],color="SkyBlue",thickness=2);
fi:
end proc:

}{}
\end{mapleinput}

\mapleresult
\end{maplegroup}

\begin{maplegroup}
\begin{mapleinput}
\mapleinline{active}{1d}{
# MAIN BODY OF THE ALGORITHM

S[0]:=\{seq( [x[i,0],y[i,0]],i=1..numberofpoints)\}:
A:=S[0]:
procnum:=0:
x0:=MinOrdinatePoint(A)[1]:
y0:=MinOrdinatePoint(A)[2]:
B:=\{[x0,y0]\}: Pairs:=\{\}: Newpoints:=\{\}:
\bigskip
# GENERATING THE SPINE
while IsUpperNonempty(x0,y0,A) do
procnum:=procnum+1:
a[procnum]:=DrawSpine(x0,y0,A):
anormal[procnum]:=DrawSpineNormal(x0,y0,A):
Pairs:=\{[[x0,y0],[x,y]]\} union Pairs:
if not [x,y] in S[0] then Newpoints:=\{[x,y]\} union Newpoints: fi:
x0:=x; y0:=y:
B:= \{[x0,y0]\} union B:
TA:=ULeft(x0,y0,A) union URight(x0,y0,A):
if PositionCode(x0,y0,TA)=1 then A:=ULeft(x0,y0,A);
elif PositionCode(x0,y0,TA)=2 then A:=URight(x0,y0,A);
elif PositionCode(x0,y0,TA)=3 then A:=ULeft(x0,y0,A) union URight(x0,y0,A);
else A:=\{[x0,y0]\}:
fi:
end:
procnum0:=procnum:
\bigskip
# CONNECTING THE POINTS TO THE SPINE TO OBTAIN SKELETON
C:=S[0] minus B:
for t in C do
  for i from 1 to nops(B) do
  for j from 1 to nops(B) do
  if t[2]>=B[i][2] and t[2]<=B[j][2] and B[i][1]=B[j][1] then
    procnum:=procnum+1:
    a[procnum]:=line(t,[B[i][1],t[2]],color="DeepBlue",thickness=2);
    anormal[procnum]:=line(t,[B[i][1],t[2]],color="SkyBlue",thickness=2);
    Pairs:=\{[t,[B[i][1],t[2]]]\} union Pairs:
  fi:
  od:od:od:
procnum1:=procnum:
\bigskip
# HATCHING OPERATION TO THE SKELETON
minapsis:=MinApsisPoint(S[0])[1]:
maxapsis:=MaxApsisPoint(S[0])[1]:
stepsize:=0.01:
#You can decrease the stepsize for more dense hatching process
for i from minapsis to maxapsis by stepsize do
  count:=0:
  for t in Pairs do
  if i<t[1][1] and i>t[2][1] then count:=count+1: ord[count]:=t[1][2]:
  elif i>t[1][1] and i<t[2][1] then count:=count+1: ord[count]:=t[1][2]:
  fi:
  od:
  if count>1 then
    ordmin:=min(seq(ord[z],z=1..count)):
    ordmax:=max(seq(ord[z],z=1..count)):
    procnum:=procnum+1:
    a[procnum]:=line([i,ordmin],[i,ordmax],color="SkyBlue",transparency=0.9);
    anormal[procnum]:=line([i,ordmin],[i,ordmax],color="SkyBlue");
  fi:
  od:
}{}
\end{mapleinput}

\mapleresult
\end{maplegroup}

\begin{maplegroup}
\begin{mapleinput}
\mapleinline{active}{1d}{

# NOW IT IS DRAWING THE TIGHT SPAN STEP BY STEP
a[0]:=pointplot(\{seq([S[0][n][1],S[0][n][2]],n=1..numberofpoints)\},
symbol=solidcircle,color="BlueGreen",symbolsize=20):
anormal[0]:=pointplot(\{seq([S[0][n][1],S[0][n][2]],n=1..numberofpoints)\},
symbol=solidcircle,color="Red",symbolsize=10):
newpoint:=pointplot(\{seq([Newpoints[n][1],Newpoints[n][2]],n=1..nops(Newpoints))\},
symbol=solidcircle,color="PaleRed",symbolsize=16):
\bigskip
display(a[0],axes=normal,scaling=constrained);
display(seq(a[i],i=1..procnum0),newpoint,a[0],axes=none,scaling=constrained);
display(seq(a[i],i=procnum0+1..procnum1),seq(a[i],i=1..procnum0),newpoint,a[0],
axes=none,scaling=constrained);
display(seq(a[i],i=procnum0+1..procnum),seq(a[i],i=1..procnum0),newpoint,a[0],
axes=none,scaling=constrained);
display(seq(anormal[i],i=1..procnum),anormal[0],,axes=none,scaling=constrained);

}{}
\end{mapleinput}

\mapleresult
\end{maplegroup}

\bigskip


\begin{thebibliography}{1}
\bibitem{bur} D. Burago, Y. Burago, S. Ivanov, A Course in Metric Geometry, Graduate Studies in Mathematics, American Mathematical Society, USA, 2001.

\bibitem{dre} A. Dress, Trees, tight extensions of metric spaces, and
the cohomological dimension of certain groups: A note on
combinatorial properties of metric spaces, Advances in Mathematics
53 (1984), 321--402.

\bibitem{epp} D. Eppstein, Optimally fast incremental Manhattan plane embedding
and planar tight span construction, Journal of Computational Geometry 2(1) (2011), 144--182.

\bibitem{epp1} D. Eppstein, personal communication.

\bibitem{isb} J. R. Isbell, Six theorems about injective metric spaces, Comment. Math. Helvetici 39 (1964), 65--76.

\bibitem{kilicarxivilk} M. Kilic, S. Kocak, Tight Span of Subsets of The Plane With The Maximum Metric, ArXiv:1506.05982.

\bibitem{kilicarxiv} M. Kilic, S. Kocak, Tight Span of Path Connected Subsets of the Manhattan Plane, ArXiv:1507.05041.

\bibitem{pap} A. Papadopoulos, Metric Spaces, Convexity and Nonpositive Curvature, Irma Lectures in Mathematics and Theoretical Physics, European Mathematical Society, Germany, 2005.

\end{thebibliography}
 \end{document}